\documentclass{amsart}
\usepackage{amssymb, amsmath}
\usepackage{verbatim}
\newtheorem{theorem}{Theorem}[section]
\newtheorem{lemma}[theorem]{Lemma}
\newtheorem{proposition}[theorem]{Proposition}

\newtheorem{corollary}[theorem]{Corollary}

\theoremstyle{definition}

\theoremstyle{remark}
\newtheorem{remark}[theorem]{Remark}

\numberwithin{equation}{section}



\newcommand{\LP}[2]{L^{#1}(X_{#2}, \Sigma_{#2}, m_{#2})}

\newcommand{\M}{\mathcal{M}}

\newcommand{\tM}{\widetilde{\mathcal{M}}}

\newcommand{\I}{1\!{\mathrm l}}

\begin{document}

\title{Multipliers on Noncommutative Orlicz Spaces}

\author{Louis E. Labuschagne}
\address{School for Comp., Stat. \& Math. Sci., Fac. of Nat. Sci.,
North-West Univ. (Potch. Campus) - Box 209, Pvt Bag X6001, 2520 Potchefstroom,
South Africa} 
\email{louis.labuschagne@nwu.ac.za}
\subjclass[2010]{47B38, 47B47 (Primary); 46B50, 46L52 (Secondary)}
\date{\today}

\thanks{This work is based on research supported by the National Research Foundation. Any opinion, findings and conclusions or recommendations expressed in this material, are those of the author, and therefore the NRF do not accept any liability in regard thereto.}
\keywords{}

\begin{abstract} 
We establish very general criteria for the existence of multiplication operators between noncommutative Orlicz spaces $L^{\psi_0}(\tM)$ and $L^{\psi_1}(\tM)$. We then show that these criteria contain existing results, before going on to briefly look at the extent to which the theory of multipliers on Orlicz spaces differs from that of $L^p$-spaces. In closing we describe the compactness properties of such operators.
\end{abstract}

\maketitle

\section{Preliminaries}

General von Neumann algebraic notation will be based on that of \cite{BRo},  
\cite{Tak} with $\M$ denoting a von Neumann algebra and $\I$ the identity  
element thereof. As regards $L_p$-spaces we will use \cite{Tp} and \cite{FK} 
as basic references for the non-commutative context. In this paper we will 
restrict attention to the case of semifinite von Neumann algebras. The fns 
trace of such an algebra $\M$ will be denoted by $\tau_{\M} = \tau$. The projection 
lattice of a von Neumann algebra $\M$ will be denoted by $\mathbb{P}(\M)$.

By the term an \emph{Orlicz function} we understand a convex function 
$\varphi : [0, \infty) \to [0, \infty]$ satisfying $\varphi(0) = 0$ and $\lim_{u \to \infty} 
\varphi(u) = \infty$, which is neither identically zero nor infinite valued on all of 
$(0, \infty)$, and which is left continuous at $b_\varphi = \sup\{u > 0 : \varphi(u) < 
\infty\}$. These axioms ensure that any Orlicz function 
must also be increasing, and continuous on $[0, b_\varphi]$. 

Within the category of Orlicz functions there is an especially ``nice'' class of Orlicz functions called the $N$-functions. In essence an Orlicz function $\varphi$ is an $N$-function if it is continuous and satisfies $\lim_{t\to 0}\frac{\varphi(t)}{t} = 0$ and $\lim_{t\to \infty}\frac{\varphi(t)}{t} = \infty$

It is worth pointing out that each Orlicz function is ``almost'' an $N$-function in the following sense:

\begin{lemma}\label{Nfn}
Let $\varphi$ be an Orlicz function. For any $0 < q < 1$ we then have that $$\lim_{t\to \infty}\frac{\varphi(t)}{t^q} = \infty \quad \mbox{and} \quad \lim_{t\to 0}\frac{\varphi(t)}{t^{1/q}} = 0.$$
\end{lemma}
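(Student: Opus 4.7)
\medskip

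\noindent\textbf{Plan.} The strategy is to leverage the convexity of $\varphi$ with the normalization $\varphi(0)=0$, which forces the ratio $t\mapsto \varphi(t)/t$ to be non-decreasing on $(0,\infty)$. Both limits should then reduce to matching this ratio against a power $t^{1-q}$ or $t^{1-1/q}$.

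\medskip

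\noindent\textbf{First limit, $t\to\infty$.} Since $\varphi$ is not identically zero on $(0,\infty)$ and $\varphi\to\infty$, there exists some $t_{0}>0$ with $\varphi(t_{0})>0$. Monotonicity of $\varphi(t)/t$ yields
$$\frac{\varphi(t)}{t}\;\geq\;\frac{\varphi(t_{0})}{t_{0}}=:c>0\qquad(t\geq t_{0}).$$
Writing $\varphi(t)/t^{q}=(\varphi(t)/t)\cdot t^{1-q}\geq c\, t^{1-q}$ and using $1-q>0$ immediately gives $\varphi(t)/t^{q}\to\infty$. This half is essentially immediate.

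\medskip

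\noindent\textbf{Second limit, $t\to 0$.} Here I would dualize the above, aiming for an upper bound of the form $\varphi(t)\leq \varepsilon(t)\cdot t^{1/q}$ with $\varepsilon(t)\to 0$. The natural tool is the complementary (Young conjugate) function $\varphi^{\ast}(s)=\sup_{u\geq 0}(su-\varphi(u))$, which is itself an Orlicz function (via the axioms already imposed on $\varphi$, including left continuity at $b_{\varphi}$). Applying the already-established first limit to $\varphi^{\ast}$ at infinity with exponent $q'=1-q\in(0,1)$ gives $\varphi^{\ast}(s)/s^{1-q}\to\infty$, i.e.\ $\varphi^{\ast}(s)\geq s^{1-q}\cdot M(s)$ with $M(s)\to\infty$. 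Inserting this into Young's inequality $tu\leq \varphi(t)+\varphi^{\ast}(u)$ and optimizing the choice of $u=u(t)$ near zero should produce a matching upper bound $\varphi(t)=o(t^{1/q})$ as $t\to 0$, via the duality that swaps behaviour at $\infty$ for $\varphi^{\ast}$ with behaviour near $0$ for $\varphi$.

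\medskip

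\noindent\textbf{Main obstacle.} Convexity and $\varphi(0)=0$ on their own are \emph{not} enough: they only yield the crude bound $\varphi(t)\leq t\varphi(1)$ on $[0,1]$, which gives $\varphi(t)/t^{1/q}\leq \varphi(1)\, t^{1-1/q}$, and since $1-1/q<0$ this blows up. The proof must therefore genuinely use an extra structural feature of Orlicz functions (left continuity at $b_{\varphi}$, or equivalently that $\varphi$ is not affine at $0$ in the relevant sense), and it is precisely here that passing to $\varphi^{\ast}$ and invoking the already-proved first limit for the conjugate is expected to carry the argument. Verifying that the conjugate inherits the hypotheses of the lemma, and that the exponent $1-q$ (not $q$) is the correct one to feed into $\varphi^{\ast}$ so as to dualize to exponent $1/q$ for $\varphi$, will be the delicate bookkeeping step.
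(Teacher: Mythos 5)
Your treatment of the first limit is correct and coincides with the paper's own argument: convexity together with $\varphi(0)=0$ forces $t\mapsto\varphi(t)/t$ to be non-decreasing, so $\varphi(t)/t\geq c>0$ for $t\geq t_{0}$ and hence $\varphi(t)/t^{q}\geq c\,t^{1-q}\to\infty$. (Strictly you should take $t_{0}$ with $0<\varphi(t_{0})<\infty$; if no such $t_{0}$ exists the claim is trivial. This is cosmetic.)

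The second limit is where the genuine problem lies, and you have in fact put your finger on it without drawing the right conclusion. The ``main obstacle'' you describe is not something to be circumvented by duality; it is evidence that the exponent in the statement is a typo. As literally stated the second limit is false: $\varphi(t)=t$ is an Orlicz function under the paper's axioms, and $\varphi(t)/t^{1/q}=t^{1-1/q}\to\infty$ as $t\to0$ because $1/q>1$. No argument, dual or otherwise, can close that gap. The claim that actually makes $\varphi$ ``almost'' an $N$-function, i.e.\ a \emph{weakening} of the condition $\lim_{t\to0}\varphi(t)/t=0$, is $\lim_{t\to0}\varphi(t)/t^{q}=0$ for $0<q<1$, and for that the elementary bound you discarded is exactly what is needed and is what the paper uses: either $\varphi$ vanishes on a neighbourhood of $0$ (the case $a_{\varphi}>0$, where the limit is trivially $0$), or one may choose $t_{0}$ arbitrarily small with $0<\varphi(t_{0})<\infty$, whence $\varphi(t)/t\leq\varphi(t_{0})/t_{0}=:C<\infty$ for $t\leq t_{0}$ and so $\varphi(t)/t^{q}\leq C\,t^{1-q}\to0$. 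Your proposed detour through $\varphi^{*}$ would not deliver even a corrected variant: Young's inequality bounds $\varphi$ from \emph{below}, not above; to bound $\varphi$ from above via $\varphi=\varphi^{**}$ you would need a lower bound on $\varphi^{*}$ \emph{near $0$}, since conjugation pairs the behaviour of $\varphi$ near $0$ with that of $\varphi^{*}$ near $0$ (with reciprocal exponents, as $(t^{p}/p)^{*}=s^{p'}/p'$ shows), whereas the growth of $\varphi^{*}(s)/s^{1-q}$ at infinity says nothing about this regime. So the duality step is both misdirected and unnecessary; the correct fix is to repair the exponent and then use the one-line convexity estimate.
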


\begin{proof}
Let $t_0 > 0 $ be given. By convexity and the fact that $\varphi(0)=0$, we will have that $\varphi(st_0) \leq s\varphi(t_0)$ for all $0 \leq s 
\leq 1$. Equivalently $\varphi(st_0) \geq s\varphi(t_0)$ for all $s \geq 1$. Notice that the second limit formula is obviously true if $a_\varphi > 
0$. We may therefore clearly assume that $a_\varphi = 0$. But in that case we can pick $t_0 > 0$ so that $\infty > \varphi(t_0) > 0$. If we combine this with the former inequalities, we therefore get that $$\liminf_{s\to\infty} \frac{\varphi(s)}{s} \geq \frac{\varphi(t_0)}{t_0} > 0 \quad\mbox{and}\quad \infty > \frac{\varphi(t_0)}{t_0} \geq \limsup_{r\to 0} \frac{\varphi(r)}{r}.$$These inequalities in turn enable us to show that for any $0 < q < 1$ we will have that $\liminf_{s\to\infty} \frac{\varphi(s)}{s^q} = \infty$ and $\limsup_{r\to 0} \frac{\varphi(r)}{r^{1/q}} = 0$. This is sufficient to prove the claims. 
\end{proof}

Each Orlicz function $\varphi$ induces a complementary Orlicz function $\varphi^*$ which is defined by 
$\varphi^*(u) = \sup_{v > 0}(uv - \varphi(v))$. The pair $\varphi$ and $\varphi^*$ satisfy the following Hausdorff-Young inequality:
$$st \leq \varphi(s) + \varphi^*(t) \qquad s,t \geq 0.$$
The formal ``inverse'' $\varphi^{-1}: [0, \infty) 
\to [0, \infty]$ of an Orlicz function is defined by the formula
$$\varphi^{-1}(t) = \sup\{s : \varphi(s) \leq t\}.$$It is however only really in the case where $0 = \inf\{u > 0 : \varphi(u) > 0\}$ and $b_\varphi = \infty$, where this is an inverse in the strict sense of the word.

If we write $\LP{0}{}$ for the space of measurable 
functions on some $\sigma$-finite measure space $(X, \Sigma, m)$, then given an Orlicz function $\varphi$, the Orlicz space 
$\LP{\varphi}{}$ associated with  $\varphi$ is defined to be the set $$L^{\varphi} = \{f \in 
L^0 : \varphi(\lambda |f|) \in L^1 \quad \mbox{for some} \quad \lambda = \lambda(f) > 0\}.$$
This space turns out to be a linear subspace of $L^0$ which becomes a Banach space when 
equipped with the so-called Luxemburg-Nakano norm 
$$\|f\|_\varphi = \inf\{\lambda > 0 : \|\varphi(|f|/\lambda)\|_1 \leq 1\}.$$An equally natural, but ultimately equivalent, norm for 
this space is the so-called Orlicz norm, given by the formula
$$\|f\|^0_\varphi = \sup\{\int |fg| \, \mathrm{d}m : f \in L^{\varphi^*}, \|f\|_\varphi \leq 1\}. $$

The Orlicz spaces corresponding to the specific Orlicz functions $\varphi_p(t) = t^p$ ($\infty > p \geq 1$) are of course 
nothing but the $L^p$-spaces. In the category of Orlicz spaces various classes of spaces may be distinguished by imposing various growth conditions on the underlying Orlicz function. We briefly mention those we will have occasion to use. We say that $\varphi$ satisfies $\Delta_2$ \emph{for all} $u$ if there exists a positive constant $K$ such that $\varphi(2u) \leq K \varphi(u)$ for all $u > 0$. Within this class we find those Orlicz functions $\varphi$ which satisfy the $\Delta'$ condition which states that there exists a constant $C > 0$ and some $u_0$ such that $$\varphi(st) \leq C\psi(s)\psi(t) \qquad \mbox{for all} \quad s, t \geq u_0.$$
Similarly $\psi$ is said to satisfy condition $\nabla'$ if there exists a constant $b > 0$ and some $u_0$ such that $$\psi(bst) \geq \psi(s)\psi(t) \qquad \mbox{for all} \quad s, t \geq u_0.$$If one of the above conditions holds for the case $u_0 = 0$, that condition is said to hold \emph{globally}.

The $\Delta'$ condition as formulated above, can variously be shown to be equivalent to the existence of a constant 
$a > 0$ for which $\psi(ast) \leq \psi(s)\psi(t)$ for all $s, t \geq u_0$. To see this notice that by convexity and the fact that $\psi(0) = 0$, we have that $\psi(st) \leq s\psi(t)$ for all $0 \leq s \leq 1$ and all $t \geq 0$. So if one assumes that $C>1$, one may set $a = \frac{1}{C}$, and use this fact to conclude from the $\Delta'$-condition that $\psi(ast) \leq \frac{1}{C}\psi(st) \leq \psi(s)\psi(t)$. Conversely if for some $a > 0$ we have that $\psi(ast) \leq \psi(s)\psi(t)$ for all $s, t \geq u_0$, then on assuming that $a$ is small enough to ensure that $\frac{1}{a^2} \geq u_0$, it follows that $\psi(st) \leq \psi(\frac{1}{a}s)\psi(t) \leq \psi(\frac{1}{a^2})\psi(s)\psi(t)$ for all $s, t \geq u_0$.

Now let $\M$ be a semifinite von Neumann algebra equipped with some faithful normal semifinite ($fns$) trace $\tau$. Given an 
Orlicz function $\varphi$, we may define noncommutative Orlicz spaces by exploiting the very powerful theory of Noncommutative Banach Function 
Spaces. \cite{DDdP}. We briefly describe the basic idea behind this theory. Essentially the space of all $\tau$-measurable operators 
$\widetilde{\M}$ (equipped with the topology of convergence in measure) plays the role of 
$L^0$. Given an element $f \in \widetilde{\M}$ and $t \in [0, \infty)$, the generalised singular 
value $\mu_t(f)$ is defined by $\mu_t(f) = \inf\{s \geq 0 : \tau(\I - e_s(|f|)) \leq t\}$ 
where $e_s(|f|)$, $s \in \mathbb{R}$, is the spectral resolution of $|f|$. The function $t \to 
\mu_t(f)$ will generally be denoted by $\mu(f)$. For details on the generalised singular value 
see \cite{FK}. (This directly extends classical notions where for any $f \in \LP{\infty}{}$, 
the function $(0, \infty) \to [0, \infty] : t \to \mu_t(f)$ is known as the decreasing 
rearrangement of $f$.) In the context of the theory of Dodds, Dodds and de Pagter \cite{DDdP} 
the noncommutative space $L^\varphi(\widetilde{\M})$ is formally defined to be the set
$$L^\varphi(\widetilde{\M}) = \{f \in \widetilde{\M} : \mu(f) \in 
L^{\varphi}(0, \infty)\}.$$Since for any Orlicz function $\varphi$, the Orlicz 
space $L^\varphi(0, \infty)$ is known to be a rearrangement invariant Banach Function space 
with the norm having the Fatou Property \cite[Theorem 4.8.9]{BS}, the theory of Dodds, Dodds and de Pagter 
then informs us that the space $L^\varphi(\widetilde{\M})$ defined above is a Banach space under the norm 
$\|f\|_\varphi = \|\mu(f)\|_\varphi$, which in addition injects continuously into the space of 
$\tau$-measurable operators $\tM$.

We close this introductory section by briefly mentioning a few technical results from \cite{LM}, the point of which are that if one 
follows a na\"ive approach to defining noncommutative Orlicz spaces by simply replacing $L^0$ with $\tM$ and the integral $\int \mathrm{d}m$ 
with $\tau$, one would essentially get the same space as the one produced by the theory of Dodds, Dodds and de Pagter. These facts will prove to 
be a useful tool later on.

\begin{lemma}[\cite{LM}]\label{DPvsKlemma} Let $\varphi$ be an Orlicz function and $f \in \widetilde{\M}$ a $\tau$-measurable element for which $\varphi(|f|)$ is again $\tau$-measurable. Extend $\varphi$ to a function on $[0, \infty]$ by setting $\varphi(\infty) = \infty$. Then $\varphi(\mu_t(f)) = \mu_t(\varphi(|f|))$ for any $t \geq 0$. Moreover $\tau(\varphi(|f|)) = \int_0^\infty \varphi(\mu_t(|f|))\, \mathrm{d}t$.
\end{lemma}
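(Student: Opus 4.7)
My plan is to split the statement into two pieces: (a) the generalised singular-value identity $\mu_t(\varphi(|f|)) = \varphi(\mu_t(|f|))$ for every $t \geq 0$ (using $\mu_t(f) = \mu_t(|f|)$), and (b) the integral formula, which then follows immediately from (a) combined with the standard identity $\tau(g) = \int_0^\infty \mu_t(g)\,\mathrm{d}t$ valid for any positive $\tau$-measurable operator $g$ (see \cite{FK}).

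For (a), the central step is the spectral projection identity
\[
e_s(\varphi(|f|)) = e_{\varphi^{-1}(s)}(|f|), \qquad s \geq 0.
\]
To establish this I would invoke the Borel functional calculus: $e_s(\varphi(|f|))$ equals $\chi_{A_s}(|f|)$ with $A_s = \{u \geq 0 : \varphi(u) \leq s\}$. Monotonicity of $\varphi$ makes $A_s$ an initial interval, and left continuity at its right endpoint forces $A_s = [0,\varphi^{-1}(s)]$ to be closed, whence $\chi_{A_s}(|f|) = e_{\varphi^{-1}(s)}(|f|)$. Substituting this into the definition of $\mu_t$ reduces the task to comparing
\[
\inf\{s \geq 0 : \tau(\I - e_{\varphi^{-1}(s)}(|f|)) \leq t\} \quad \text{and} \quad \varphi(\mu_t(|f|)).
\]
For this comparison I would use the two elementary bounds $\varphi(\varphi^{-1}(s)) \leq s$ (again from left continuity) and $\varphi^{-1}(\varphi(u)) \geq u$ (from monotonicity), together with monotonicity of the spectral resolution. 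Any $s$ admissible on the left makes $\varphi^{-1}(s)$ admissible in the infimum defining $\mu_t(|f|)$, giving $\mu_t(|f|) \leq \varphi^{-1}(s)$ and hence $\varphi(\mu_t(|f|)) \leq s$; conversely, each $u > \mu_t(|f|)$ makes $s = \varphi(u)$ admissible on the left, and letting $u \downarrow \mu_t(|f|)$ and exploiting right continuity of $\varphi$ on $[0,b_\varphi)$ yields the reverse inequality.

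With (a) secured, (b) is immediate: the Fack--Kosaki formula gives $\tau(\varphi(|f|)) = \int_0^\infty \mu_t(\varphi(|f|))\,\mathrm{d}t$, and substituting (a) under the integral completes the proof. The main obstacle is the bookkeeping in (a) at the boundary point $u = b_\varphi$, where $\varphi$ may jump to $\infty$: here the hypothesis that $\varphi(|f|)$ be $\tau$-measurable is what one needs to force the spectral mass of $|f|$ in $(b_\varphi,\infty)$ to vanish, so that $\mu_t(|f|) \leq b_\varphi$ and the identities $\varphi^{-1}(\varphi(b_\varphi)) = b_\varphi$ and $\varphi(b_\varphi) = \lim_{u \uparrow b_\varphi}\varphi(u)$ can be invoked to handle the edge case. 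The behaviour on plateaus of $\varphi$ (where $\varphi^{-1}$ jumps) requires a brief check but is purely routine.
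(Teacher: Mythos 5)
The paper states this lemma purely as a quotation from \cite{LM} and supplies no proof of its own, so there is no internal argument to compare yours against; I can only assess your outline on its merits, and it is essentially the standard Fack--Kosaki-style argument, correctly executed. The composition rule for the Borel functional calculus gives $e_s(\varphi(|f|)) = \chi_{\{\varphi \le s\}}(|f|) = e_{\varphi^{-1}(s)}(|f|)$, the set $\{u : \varphi(u)\le s\}$ being a closed initial interval because $\varphi$ is increasing and continuous on $[0,b_\varphi]$; the two elementary bounds $\varphi(\varphi^{-1}(s))\le s$ and $\varphi^{-1}(\varphi(u))\ge u$ then deliver the two halves of $\mu_t(\varphi(|f|)) = \varphi(\mu_t(f))$, and the trace formula follows from $\tau(g)=\int_0^\infty \mu_t(g)\,\mathrm{d}t$ for positive $\tau$-measurable $g$. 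One point worth tightening in your second direction: when $\mu_t(|f|)=b_\varphi$ there may be no $u>\mu_t(|f|)$ with $\varphi(u)<\infty$, so the limiting argument as $u$ decreases to $\mu_t(|f|)$ is unavailable there. The clean fix is to invoke the standard fact that the infimum defining $\mu_t(|f|)$ is attained --- the distribution function $\lambda\mapsto\tau(\I-e_\lambda(|f|))$ is decreasing and right continuous by normality of $\tau$ --- which lets you take $s=\varphi(\mu_t(|f|))$ directly as an admissible value whenever it is finite (and the identity is trivial when it is infinite). With that adjustment the edge case at $b_\varphi$ is covered, and the only possible plateau of a convex Orlicz function is the initial interval on which it vanishes, so the routine check you defer is indeed routine.
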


\begin{proposition}[\cite{LM}]\label{DPvsK} Let $\varphi$ be an Orlicz function and let $f \in \widetilde{\M}$ be given. There exists some 
$\alpha > 0$ so that $\int_0^\infty \varphi(\alpha\mu_t(|f|))\, \mathrm{d}t < \infty$ if and only if there exists 
$\beta > 0$ so that $\varphi(\beta|f|) \in \widetilde{\M}$ and $\tau(\varphi(\beta|f|)) < \infty$. Moreover $$\|\mu(f)\|_\varphi = \inf\{\lambda > 0 : \varphi\left(\frac{1}{\lambda}|f|\right) \in \widetilde{\M}, \tau\left(\varphi\left(\frac{1}{\lambda}|f|\right)\right) \leq 1\}.$$
\end{proposition}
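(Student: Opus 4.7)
My plan is to deduce everything from Lemma \ref{DPvsKlemma}, which already gives the equality $\tau(\varphi(|f|)) = \int_0^\infty \varphi(\mu_t(|f|))\,\mathrm{d}t$ whenever $\varphi(|f|)$ is known to be $\tau$-measurable. So the real content is to control when that hypothesis is in force, and the main obstacle is exactly the passage from the integral condition to $\tau$-measurability of $\varphi(\beta|f|)$.

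I would first dispose of the easy direction (ii)$\Rightarrow$(i). Assuming $\varphi(\beta|f|) \in \tM$ with $\tau(\varphi(\beta|f|)) < \infty$, Lemma \ref{DPvsKlemma} applied to the operator $\beta|f|$, combined with the positive homogeneity $\mu_t(\beta|f|) = \beta\mu_t(|f|)$, yields
$$\int_0^\infty \varphi(\beta \mu_t(|f|))\,\mathrm{d}t = \tau(\varphi(\beta|f|)) < \infty,$$
so I can simply take $\alpha = \beta$.

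For the harder direction (i)$\Rightarrow$(ii), the goal is to show that $\varphi(\alpha |f|)$ is automatically $\tau$-measurable. Since $\int_0^\infty \varphi(\alpha \mu_t(|f|))\,\mathrm{d}t < \infty$ and $t \mapsto \varphi(\alpha\mu_t(|f|))$ is non-increasing (being the composition of the non-increasing $\mu_\bullet(|f|)$ with the non-decreasing $\varphi$), there must exist some $t_0 > 0$ for which $\varphi(\alpha\mu_{t_0}(|f|)) < \infty$. Because $\varphi(\alpha|f|)$ is defined by Borel functional calculus and $|f| \in \tM$, one sees via the spectral projections of $|f|$ that the spectral projection of $\varphi(\alpha|f|)$ onto an interval $(s,\infty)$ (for $s = \varphi(\alpha\mu_{t_0}(|f|))$) coincides with an appropriate spectral projection of $|f|$ of trace at most $t_0$. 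This establishes $\tau$-measurability of $\varphi(\alpha|f|)$, after which Lemma \ref{DPvsKlemma} again gives $\tau(\varphi(\alpha|f|)) = \int_0^\infty \varphi(\alpha\mu_t(|f|))\,\mathrm{d}t < \infty$. Setting $\beta = \alpha$ (not merely $\beta < \alpha$) is important for the norm formula below.

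Finally, for the Luxemburg norm formula, I start from the definition used by Dodds–Dodds–de Pagter: since $L^\varphi(0,\infty)$ carries the classical Luxemburg norm,
$$\|\mu(f)\|_\varphi = \inf\Bigl\{\lambda > 0 : \int_0^\infty \varphi\bigl(\mu_t(|f|)/\lambda\bigr)\,\mathrm{d}t \le 1\Bigr\}.$$
For each admissible $\lambda$ in this infimum, the integrability condition together with the equivalence just proved (applied with $\alpha = 1/\lambda$) forces $\varphi(|f|/\lambda) \in \tM$, and then Lemma \ref{DPvsKlemma} identifies the integral with $\tau(\varphi(|f|/\lambda))$. The reverse inclusion of the two infimum sets is immediate from the same lemma. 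The genuinely delicate point throughout is the functional-calculus argument ensuring $\tau$-measurability at the right scaling parameter; handling the case $b_\varphi < \infty$ requires a careful look at the spectral projection of $|f|$ over $(b_\varphi \lambda,\infty)$, and this is where I anticipate spending most of the technical effort.
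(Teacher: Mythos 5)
The paper does not actually prove Proposition~\ref{DPvsK}: it is imported verbatim from \cite{LM} as a known technical fact, so there is no in-paper argument to compare yours against. Judged on its own terms, your reconstruction is sound and locates the genuine content correctly. The direction (ii)$\Rightarrow$(i) is indeed immediate from Lemma~\ref{DPvsKlemma} plus $\mu_t(\beta|f|)=\beta\mu_t(|f|)$; for (i)$\Rightarrow$(ii) your spectral-projection comparison works because $\varphi$ is non-decreasing, so $\{u:\varphi(\alpha u)>s\}\subseteq(\mu_{t_0}(|f|),\infty)$ for $s=\varphi(\alpha\mu_{t_0}(|f|))$, whence $\tau\bigl(\chi_{(s,\infty)}(\varphi(\alpha|f|))\bigr)\le t_0$; and you are right that obtaining $\beta=\alpha$ (so that the two sets of admissible $\lambda$ in the norm formula literally coincide) is what makes the Luxemburg-norm identity fall out of Lemma~\ref{DPvsKlemma}. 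The one point you defer but should not gloss over is that when $b_\varphi<\infty$ the operator $\varphi(\alpha|f|)$ must first be shown to be well defined: hypothesis (i) forces $\varphi(\alpha\mu_t(|f|))<\infty$ for every $t>0$, and a short argument with $\mu_0(|f|)=\lim_{t\downarrow 0}\mu_t(|f|)$ (treating separately the case where the spectral measure of $|f|$ has an atom at $b_\varphi/\alpha$) shows the set $\{u:\varphi(\alpha u)=\infty\}$ carries no spectral mass of $|f|$. With that filled in, your outline is a complete and correct proof.
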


We close this section by formulating one more fact regarding Orlicz spaces that will also prove to be useful later 
on. (This is a special case of known results in \cite{DDdP3}). The short proof may be found in \cite{LM}. 

\begin{proposition}\label{kothe}
Let $\varphi$ be an Orlicz function and $\varphi^*$ its complementary function. Then $L^{\varphi^*}(\widetilde{\M})$ equipped with the norm $\|\cdot\|^0_{\varphi^*}$ defined by $$\|f\|^0_{\varphi^*} = \sup \{\tau(|fg|) : g \in L^{\varphi}(\widetilde{\M}), \|g\|_\varphi \leq 1\} \qquad f \in L^{\varphi^*}(\widetilde{\M})$$is the K\"{o}the dual of $L^{\varphi}(\widetilde{\M})$. That is $$L^{\varphi^*}(\widetilde{\M}) = \{f \in \widetilde{\M} : fg \in L^1(\M, \tau) \text{ for all } g \in L^{\varphi}(\widetilde{\M})\}.$$Consequently
$$|\tau(fg)| \leq \|f\|^0_{\varphi^*}\cdot\|g\|_\varphi \quad\mbox{for all}\quad f \in L^{\varphi^*}(\widetilde{\M}), g \in L^{\varphi}(\widetilde{\M}).$$
\end{proposition}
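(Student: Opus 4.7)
The plan is to lift the commutative Köthe duality $L^{\varphi}(0,\infty)^{\times} = L^{\varphi^*}(0,\infty)$ to the semifinite noncommutative setting by using generalised singular values and the preceding technical results. Two inclusions must be established: the ``Hölder direction'' giving $L^{\varphi^*}(\widetilde{\M}) \subseteq (L^{\varphi}(\widetilde{\M}))^{\times}$ together with the stated dual norm inequality, and the reverse ``recovery direction'' showing that any $f \in \tM$ for which $fg \in L^1(\M,\tau)$ whenever $g \in L^{\varphi}(\widetilde{\M})$ must itself lie in $L^{\varphi^*}(\widetilde{\M})$.

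For the Hölder direction I would start from the Fack--Kosaki trace inequality
$$\tau(|fg|) \;\leq\; \int_0^{\infty} \mu_t(f)\mu_t(g)\,\mathrm{d}t,$$
valid for all $f,g \in \tM$ whose product lies in $L^1(\M,\tau)$. Combining this with Young's inequality $st \leq \varphi(s) + \varphi^*(t)$ applied pointwise in $t$, and scaling via $\lambda \mu_t(f) \cdot \mu_t(g)/\lambda$, one gets
$$\tau(|fg|) \;\leq\; \int_0^{\infty} \varphi^*(\lambda \mu_t(f))\,\mathrm{d}t + \int_0^{\infty} \varphi(\mu_t(g)/\lambda)\,\mathrm{d}t.$$
Proposition~\ref{DPvsK} translates each integral into a trace of the form $\tau(\varphi^*(\lambda|f|))$ or $\tau(\varphi(|g|/\lambda))$, and optimising over $\lambda$ (the standard Luxemburg-to-Orlicz-norm passage) produces the multiplicative bound $\tau(|fg|) \leq \|\mu(f)\|_{\varphi^*}\,\|\mu(g)\|_{\varphi}$. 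Taking the supremum over $\|g\|_{\varphi} \leq 1$ then yields $\|f\|^0_{\varphi^*} \leq \|\mu(f)\|_{\varphi^*} < \infty$, and in particular the displayed inequality.

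For the reverse inclusion, suppose $f \in \tM$ satisfies $fg \in L^1(\M,\tau)$ for every $g \in L^{\varphi}(\widetilde{\M})$. Since both $L^{\varphi}(\widetilde{\M})$ and $L^1(\M,\tau)$ embed continuously into $\tM$ and multiplication in $\tM$ is jointly continuous on bounded sets, a closed graph argument shows that $g \mapsto fg$ is a bounded linear map $L^{\varphi}(\widetilde{\M}) \to L^1(\M,\tau)$; hence $\|f\|^0_{\varphi^*} < \infty$. The remaining task is to promote this to $\mu(f) \in L^{\varphi^*}(0,\infty)$. The device is to show, by choosing $g$ of the form $g = v\,h(|f|)$ with $v$ the partial isometry from the polar decomposition $f = u|f|$ and $h$ a bounded Borel function supported where $|f|$ has its larger spectral mass, that every commutative ``test vector'' $k \in L^{\varphi}(0,\infty)$ with $\|k\|_\varphi \leq 1$ can be realised as $\mu(g)$ for a $g$ achieving equality in Fack--Kosaki, namely $\tau(|fg|) = \int_0^\infty \mu_t(f)k(t)\,\mathrm{d}t$. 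Consequently
$$\|f\|^0_{\varphi^*} \;\geq\; \sup\Bigl\{\int_0^\infty \mu_t(f)k(t)\,\mathrm{d}t : k \in L^{\varphi}(0,\infty),\, \|k\|_{\varphi}\leq 1\Bigr\},$$
and the classical commutative Orlicz Köthe duality on $(0,\infty)$ identifies the right-hand side with $\|\mu(f)\|_{\varphi^*}$. Thus $\mu(f) \in L^{\varphi^*}(0,\infty)$, i.e.\ $f \in L^{\varphi^*}(\widetilde{\M})$, and simultaneously $\|f\|^0_{\varphi^*} = \|\mu(f)\|_{\varphi^*}$.

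The main obstacle is the construction in the last step: one must exhibit, for each prescribed singular value profile $k$, an honest element $g \in L^{\varphi}(\widetilde{\M})$ that both realises $\mu(g) = k$ and is ``aligned'' with $f$ in the sense that equality holds in the Fack--Kosaki inequality. This alignment is delicate when the spectral decomposition of $|f|$ has atoms of infinite trace or when $\mu(f)$ fails to be strictly decreasing; it is handled by approximating $|f|$ by finite spectral sums and passing to the limit using the Fatou property of the Orlicz norm on $(0,\infty)$ \cite[Theorem~4.8.9]{BS}. Once this alignment is established, everything reduces to the already known commutative Orlicz duality, and the proposition follows.
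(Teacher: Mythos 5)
The paper does not actually prove this proposition: it is stated as ``a special case of known results in \cite{DDdP3}'' with the short proof deferred to \cite{LM}. Your outline is, in substance, a reconstruction of that standard argument, and its architecture is sound: the forward inclusion via the Fack--Kosaki submajorisation $\tau(|fg|)\leq\int_0^\infty\mu_t(f)\mu_t(g)\,\mathrm{d}t$ plus Young's inequality, a closed-graph argument through the measure topology to get boundedness of $g\mapsto fg$, and then reduction to the commutative K\"othe duality $L^{\varphi}(0,\infty)^{\times}=L^{\varphi^*}(0,\infty)$ by realising decreasing test functions as singular value profiles of elements aligned with $f$. You also correctly identify where the real work lies.

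Two points need repair. First, your norm bookkeeping conflates the Luxemburg and Orlicz norms: optimising over $\lambda$ in the Young-inequality estimate with the Luxemburg norm on both sides yields only $\tau(|fg|)\leq 2\|\mu(f)\|_{\varphi^*}\|\mu(g)\|_{\varphi}$, and the concluding identity $\|f\|^0_{\varphi^*}=\|\mu(f)\|_{\varphi^*}$ is false in general (already for $\varphi(t)=t^2$ on a finite measure space the two sides differ by the factor $2$; in general $\|\mu(f)\|_{\varphi^*}\leq\|f\|^0_{\varphi^*}\leq 2\|\mu(f)\|_{\varphi^*}$, cf.\ \cite{RR}). This is harmless for the proposition as stated, since only finiteness of the supremum and the trivial consequence $|\tau(fg)|\leq\|f\|^0_{\varphi^*}\|g\|_{\varphi}$ are claimed, but you should not assert the equality. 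Second, the ``alignment'' step is the entire content of the converse inclusion and is left as a gesture: what is actually needed is the attainment statement $\int_0^\infty\mu_t(f)\mu_t(g)\,\mathrm{d}t=\sup\{\tau(|fh|):h\in\tM,\ \mu(h)\leq\mu(g)\}$, which is a nontrivial result of \cite{DDdP3} (building on \cite{FK}); one should also note that it suffices to test against \emph{decreasing} $k$ (replace $k$ by its decreasing rearrangement, using Hardy--Littlewood and rearrangement invariance), since not every $k\in L^{\varphi}(0,\infty)$ is of the form $\mu(g)$. With that lemma either proved or explicitly cited, your argument closes; without it, the converse inclusion is not yet established.
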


In the paper \cite{LM} Labuschagne and Majewski recently provided general criteria for the existence of a composition operator between two possibly different Orlicz spaces $L^{\psi_0}$ and $L^{\psi_1}$. The appearance of this result raises the question of whether a similar situation pertains as far as multiplication operators are concerned. In other words what general criteria will guarantee the existence of multiplication operators between possibly \emph{different} Orlicz spaces? Furthermore in this general context, how closely may we reasonably expect the theory of multiplication operators to parallel the theory of composition operators? Is the resultant theory nothing more than a minor technical modification of the $L^p$ theory, or may we expect some truly exotic behaviour? In the ensuing sections we will attempt to cast some light on these and related questions, before closing with a consideration of the compactness properties of multiplication operators on Orlicz spaces.

\section{Existence of Multipliers}

We first provide very general criteria for the existence of multiplication operators between possibly different Orlicz spaces. We will see that all that is neede to guarantee the existence of such an operator, is a \emph{generalised Hausdorff-Young inequality}. These criteria turn out to be more general that any of the criteria that seems to be known currently. 
 
We briefly review some known results before proving our eixtence theorem. The result stated below is basically just a compilation of Theorems 13.7 and 13.8 of \cite{KR}, which have been slightly reformulated to better fit the present context.

\begin{theorem}[\cite{KR}]\label{krasrut}
Let $\zeta$, $\varphi_1$, and $\varphi_2$ be N-functions and let $G$ be a closed bounded subset of $\mathbb{R}^n$ equipped with Lebesgue measure. For any $f \in L^\zeta(G)$ and any $g \in L^{\varphi_1}(G)$ we will have that $fg \in L^{\varphi_2}(G)$ whenever any of the following conditions hold:
\begin{itemize}
\item There exist constants $\alpha$ and $\beta$ for which $$\varphi_2\circ\zeta(u) < \varphi_1(\alpha u), \qquad \varphi_2\circ\zeta^*(u) < \zeta(\beta u) \qquad \mbox{for all} \quad u \geq u_0.$$
\item The function $\varphi_2$ satisfies the $\Delta'$ condition globally and there exist constants $\alpha$ and $\beta$ for which $$\zeta(\alpha\varphi_2(u)) < \varphi_1(u), \qquad \zeta^*(\beta\varphi_2(u)) < \zeta(u) \qquad \mbox{for all} \quad u \geq u_0.$$
\end{itemize}
\end{theorem}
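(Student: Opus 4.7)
My plan is to reduce both cases to a single task: produce, for some $\lambda > 0$, a pointwise estimate $\varphi_2(\lambda|fg|)(x) \leq H(x)$ with $H \in L^1(G)$, from which the conclusion $fg \in L^{\varphi_2}(G)$ follows directly from the definition of the Orlicz space.

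In the first case the natural vehicle is the Hausdorff--Young inequality $|fg| \leq \zeta(|f|) + \zeta^*(|g|)$ associated with the complementary pair $\zeta, \zeta^*$. Post-composing with $\varphi_2$ and using its convexity yields the master inequality
$$\varphi_2\bigl(\tfrac{1}{2}|fg|\bigr) \leq \tfrac{1}{2}\varphi_2(\zeta(|f|)) + \tfrac{1}{2}\varphi_2(\zeta^*(|g|)).$$
On the region $\{|f|, |g| \geq u_0\}$ the two growth hypotheses dominate the right-hand side pointwise by $\tfrac{1}{2}\varphi_1(\alpha|f|)$ and $\tfrac{1}{2}\zeta(\beta|g|)$, each of which inherits integrability (after an appropriate rescaling of $f$ and $g$) from the Orlicz-class assumptions. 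On the complementary ``small-value'' region either $|f|$ or $|g|$ is uniformly bounded by $u_0$, so $|fg|$ is controlled by a scalar multiple of the remaining factor; the contribution to the integral is then finite thanks to the finite Lebesgue measure of $G$ together with the Orlicz-space membership of that remaining factor.

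In the second case the hypotheses read as bounds on $\varphi_2$ in terms of $\zeta$ and $\varphi_1$, so the strategy has to shift. I would first exploit the global $\Delta'$-condition on $\varphi_2$, which by the discussion preceding the theorem is equivalent to the existence of $a > 0$ with $\varphi_2(a s t) \leq \varphi_2(s)\varphi_2(t)$ for all $s, t \geq 0$; this yields the pointwise factorisation $\varphi_2(a|fg|) \leq \varphi_2(|f|)\varphi_2(|g|)$. A second application of Hausdorff--Young, now to the pair $\zeta, \zeta^*$ evaluated at $\varphi_2(|f|)$ and $\varphi_2(|g|)$, produces $\varphi_2(a|fg|) \leq \zeta(\varphi_2(|f|)) + \zeta^*(\varphi_2(|g|))$. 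The hypotheses $\zeta(\alpha\varphi_2(u)) \leq \varphi_1(u)$ and $\zeta^*(\beta\varphi_2(u)) \leq \zeta(u)$ are then precisely what is needed, after absorbing $\alpha, \beta$ into the master constant and handling the small-value region as before, to convert this into a pointwise integrable majorant.

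The principal obstacle, in either case, is the careful bookkeeping of multiplicative constants coming from Hausdorff--Young, convexity, the $\Delta'$ condition, and the hypothesised bounds, so that a single scaling $\lambda > 0$ can be verified to satisfy $\int_G \varphi_2(\lambda|fg|)\, dm < \infty$. The only conceptually delicate point beyond this bookkeeping is the restriction $u \geq u_0$ in the hypotheses, which compels the splitting of $G$ into large- and small-value regions; the boundedness of $G$ (in Lebesgue measure) is decisive in making the small-value contribution automatically finite.
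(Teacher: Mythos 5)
First, a point of orientation: the paper does not actually prove Theorem~\ref{krasrut} --- it is quoted from Theorems 13.7 and 13.8 of \cite{KR} --- so your attempt can only be measured against the closest analogue in the text, namely the Remark following Corollary~\ref{compmult}, where both sets of hypotheses (with $u_0=0$) are converted into the three-variable inequality $uvw \leq M[\varphi_2^*(\alpha u)+\varphi_1(\beta v)+\zeta(\gamma w)]$ and the conclusion is extracted via Theorem~\ref{existence} and K\"othe duality. Your route --- a direct pointwise integrable majorant for $\varphi_2(\lambda|fg|)$ --- is the classical one and is viable in outline, but as written it fails at the decisive step because you have paired the two factors with the wrong halves of the Hausdorff--Young inequality. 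In the first case you write $|fg|\leq \zeta(|f|)+\zeta^*(|g|)$ and then invoke $\varphi_2\circ\zeta(u)\leq\varphi_1(\alpha u)$ and $\varphi_2\circ\zeta^*(u)\leq\zeta(\beta u)$ to obtain the majorant $\tfrac12\varphi_1(\alpha|f|)+\tfrac12\zeta(\beta|g|)$. But the hypotheses are $f\in L^\zeta$ and $g\in L^{\varphi_1}$, so \emph{neither} term of this majorant is known to be integrable. The correct splitting is $|fg|\leq\zeta(|g|)+\zeta^*(|f|)$, which after composing with $\varphi_2$ and using convexity yields $\varphi_1(\alpha|g|)+\zeta(\beta|f|)$ (up to constants), each term integrable after rescaling; this is exactly the orientation used in the paper's Remark, where $\zeta$ is applied to the $\varphi_1$-variable and $\zeta^*$ to the $\zeta$-variable. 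The same transposition occurs in your second case: $\zeta$ must be applied to $\varphi_2(|g|)$, so that the bound $\zeta(\alpha\varphi_2(|g|))\leq\varphi_1(|g|)$ lands on a function known to be integrable, and $\zeta^*$ to $\varphi_2(|f|)$, not the other way around.

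Second, your treatment of the small-value region $\{|f|<u_0\}\cup\{|g|<u_0\}$ is too quick. If, say, $|g|<u_0$ then $|fg|\leq u_0|f|$, and you need $\varphi_2(\lambda u_0|f|)$ to be integrable; finiteness of the Lebesgue measure of $G$ together with $f\in L^\zeta$ does not give this by itself. One must first extract from the hypotheses the comparisons $\varphi_2(u)\leq\zeta(\beta u)$ and $\varphi_2(u)\leq\varphi_1(\alpha u)$ for all large $u$ (using the $N$-function property, e.g.\ $\zeta^*(u)\geq u$ for large $u$ inserted into $\varphi_2(\zeta^*(u))\leq\zeta(\beta u)$), so that $L^\zeta(G)$ and $L^{\varphi_1}(G)$ both embed into $L^{\varphi_2}(G)$ over the finite measure space $G$. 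With the pairing corrected and this embedding made explicit, your argument goes through and is genuinely more elementary than the paper's duality-based derivation, which in exchange works for infinite measure and in the noncommutative setting at the price of requiring the hypotheses to hold globally.
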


We are now ready to provide what we believe to be the most general existence criteria for a multiplication operator.

\begin{theorem}\label{existence}
Let $\zeta, \varphi_1, \varphi_2$ be Orlicz functions for which there exist positive constants $M, \alpha, \beta, \gamma$ such that
$$uvw \leq M\left[\varphi_2^*(\alpha u) + \varphi_1(\beta v) + \zeta(\gamma w)\right]\quad \mbox{for all} \quad u, v, w \geq 0.$$ 
For any $f\in L^\zeta(\tM)$ and any $g\in L^{\varphi_1}(\tM)$, we then have that $fg \in L^{\varphi_2}(\tM)$.
\end{theorem}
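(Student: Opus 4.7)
The strategy is to realise $L^{\varphi_2}(\tM)$ as a K\"othe dual and then verify membership by a duality pairing. Since $\varphi_2^{**}=\varphi_2$ for any convex Orlicz function, Proposition \ref{kothe} applied with $\varphi=\varphi_2^*$ identifies $L^{\varphi_2}(\tM)$ with the space $\{k\in\tM : kh\in L^1(\M,\tau)\text{ for all } h\in L^{\varphi_2^*}(\tM)\}$. Since $\tM$ is a $*$-algebra, $fg\in\tM$ automatically, so the task reduces to showing that $\tau(|fgh|)<\infty$ for every $h\in L^{\varphi_2^*}(\tM)$.

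Fix such an $h$. Iterating the Fack--Kosaki submultiplicativity $\mu_{s+t}(ab)\le\mu_s(a)\mu_t(b)$ yields $\mu_{3s}(fgh)\le\mu_s(f)\mu_s(g)\mu_s(h)$, so a change of variable gives
$$\tau(|fgh|)=\int_0^\infty\mu_t(fgh)\,dt\le 3\int_0^\infty\mu_s(f)\mu_s(g)\mu_s(h)\,ds.$$
Applying the three-variable hypothesis pointwise with $u=\mu_s(h)$, $v=\mu_s(g)$, $w=\mu_s(f)$ and using Lemma \ref{DPvsKlemma} to convert each of the resulting integrals of Orlicz terms back into a trace then yields
$$\tau(|fgh|)\le 3M\bigl[\tau(\varphi_2^*(\alpha|h|))+\tau(\varphi_1(\beta|g|))+\tau(\zeta(\gamma|f|))\bigr].$$

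The constants $\alpha,\beta,\gamma$ supplied by the hypothesis need not be small enough to make these three traces finite directly. This is the point at which Proposition \ref{DPvsK} is essential: each of $f,g,h$ belongs to the corresponding Orlicz space, so each may be rescaled by a sufficiently large factor $\lambda_i>0$ to bring $\alpha,\beta,\gamma$ below the thresholds needed for finiteness (using that $\varphi_2^*$, $\varphi_1$, $\zeta$ are increasing). Running the same estimate with $f/\lambda_1$, $g/\lambda_2$, $h/\lambda_3$ in place of $f,g,h$ — and observing that $|(f/\lambda_1)(g/\lambda_2)(h/\lambda_3)|=|fgh|/(\lambda_1\lambda_2\lambda_3)$ — then gives $\tau(|fgh|)<\infty$. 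The only real subtlety in the argument is this rescaling bookkeeping; once the K\"othe-duality reformulation is in place, everything else is essentially mechanical.
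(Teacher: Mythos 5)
Your proof is correct and follows essentially the same route as the paper's: reduce via K\"othe duality (Proposition \ref{kothe}) to showing $\tau(|fgh|)<\infty$ for every $h\in L^{\varphi_2^*}(\tM)$, bound $\mu_t(fgh)$ by $\mu_{t/3}(f)\mu_{t/3}(g)\mu_{t/3}(h)$ using Fack--Kosaki submultiplicativity, and apply the three-variable inequality under the integral together with Lemma \ref{DPvsKlemma}. Your explicit rescaling of $f$, $g$, $h$ to bring the three Orlicz traces into the finite range is in fact tidier than the paper's treatment, which simply assumes $\tau(\zeta(f))$, $\tau(\varphi_1(g))$, $\tau(\varphi_2^*(h))<\infty$ ``for the sake of argument.''
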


\begin{proof}Let $f\in L^\zeta(\tM)$, $g\in L^{\varphi_1}(\widetilde{M})$ and $h\in L^{\varphi_2^*}(\tM)$ be given. For the sake of argument suppose that $a_{\varphi_i} = 0$, $b_{\varphi_i}=\infty$ (for $i = 1,2$), and that $\tau(\zeta(f)) < \infty$, $\tau(\varphi_1(g)) < \infty$ and $\tau(\varphi_2^*(h)) < \infty$. By the given inequality, we have that $$\mu_t(fgh) \leq \mu_{t/3}(f)\mu_{t/3}(g)\mu_{t/3}(h) \leq M\left[\zeta(\mu_{\alpha t/3}(f)) + \varphi_1(\mu_{\beta t/3}(g)) + \varphi_2^*(\mu_{\gamma t/3}(h))\right].$$ So by Lemma \ref{DPvsKlemma}, 
\begin{eqnarray*}
\tau(|fgh|) &=& \int_0^\infty \mu_t(fgh))dt\\
&\leq& M \int_0^\infty \left[\zeta(\mu_{\alpha t/3}(f)) + \varphi_1(\mu_{\beta t/3}(g)) + \varphi_2^*(\mu_{\gamma t/3}(h))\right] dt\\
&=& M \left[\frac{3}{\alpha}\int_0^\infty\zeta(\mu_{t}(f))dt + \frac{3}{\beta}\int_0^\infty \varphi_1(\mu_{t}(g))dt +\frac{3}{\gamma}\int_0^\infty \varphi_2^*(\mu_{t}(h))dt\right]\\
&=& M\left[\frac{3}{\alpha}\tau(\zeta(f)) + \frac{3}{\beta}\tau(\varphi_1(g)) + \frac{3}{\gamma}\tau(\varphi_2^*(h))\right]\\
&<& \infty. 
\end{eqnarray*}
Thus $fgh \in L^1(\M)$. So by Proposition \ref{kothe}, we have $fg \in L^{\varphi_2}(\M)$ as required.
\end{proof}

\begin{corollary}\label{compmult}
Let $\psi, \varphi_1, \varphi_2$ be Orlicz functions.
If either of the following two conditions hold, we have that $fg \in L^{\varphi_2}(\tM)$ for any $f\in L^\zeta(\tM)$ 
and any $g\in L^{\varphi_1}(\tM)$. 
\begin{enumerate}
\item[(a)] $\zeta = \varphi_2\circ\psi^*$ is again an Orlicz function, and $\varphi_2\circ\psi = \varphi_1$.
\item[(b)] $\varphi_2$ satisfies $\Delta'$ globally, $\zeta = \psi^*\circ\varphi_2$ is again an Orlicz function, and $\psi\circ\varphi_2 = \varphi_1$.
\end{enumerate}
\end{corollary}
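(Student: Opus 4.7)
The plan is to reduce both parts to Theorem \ref{existence} by establishing, from the compositional hypotheses relating $\zeta, \varphi_1, \varphi_2, \psi$, the generalised Hausdorff--Young inequality
$$uvw \leq M[\varphi_2^*(\alpha u) + \varphi_1(\beta v) + \zeta(\gamma w)] \qquad \text{for all } u,v,w \geq 0.$$
Once this inequality is available, the existence statement follows at once. The two hypotheses (a) and (b) correspond to two different strategies for producing this three-variable inequality from the classical two-variable Young inequality.

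For case (a), I would first apply the Young inequality for the complementary pair $(\varphi_2, \varphi_2^*)$ to the product $u \cdot (vw)$ to obtain $uvw \leq \varphi_2^*(u) + \varphi_2(vw)$, and then work on the term $\varphi_2(vw)$. Applying Young for $(\psi, \psi^*)$ to $vw$ and using monotonicity of $\varphi_2$ gives $\varphi_2(vw) \leq \varphi_2(\psi(v) + \psi^*(w))$. I would split this last expression using convexity of $\varphi_2$ together with $\varphi_2(0) = 0$:
$$\varphi_2(\psi(v) + \psi^*(w)) \leq \tfrac12 \varphi_2(2\psi(v)) + \tfrac12 \varphi_2(2\psi^*(w)).$$
Since $\psi$ and $\psi^*$ are themselves convex with value $0$ at $0$, we have $2\psi(v) \leq \psi(2v)$ and $2\psi^*(w) \leq \psi^*(2w)$, so monotonicity of $\varphi_2$ together with the identities $\varphi_1 = \varphi_2 \circ \psi$ and $\zeta = \varphi_2 \circ \psi^*$ yields the required inequality with constants $M = 1$, $\alpha = 1$, $\beta = \gamma = 2$.

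For case (b), I would again begin with $uvw \leq \varphi_2^*(u) + \varphi_2(vw)$, but now use the global $\Delta'$ condition on $\varphi_2$ directly: it furnishes a constant $C > 0$ with $\varphi_2(vw) \leq C \varphi_2(v)\varphi_2(w)$. I would then apply the Young inequality for $(\psi, \psi^*)$ to the \emph{scalar} product $\varphi_2(v) \cdot \varphi_2(w)$, giving
$$\varphi_2(v)\varphi_2(w) \leq \psi(\varphi_2(v)) + \psi^*(\varphi_2(w)) = \varphi_1(v) + \zeta(w).$$
This establishes the generalised Hausdorff--Young inequality with $M = \max(1,C)$ and $\alpha = \beta = \gamma = 1$.

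The main obstacle in both parts is to marry the \emph{composition} structure of $\varphi_1$ and $\zeta$ to the \emph{additive} form of the Young inequality. In (a) this is resolved by two uses of convexity: first to split $\varphi_2$ across a sum, then to push the factor $2$ through $\psi$ and $\psi^*$. In (b) the global $\Delta'$ condition is precisely what converts the composed expression $\varphi_2(vw)$ into a product, so that a single application of Young's inequality for $\psi$ produces $\varphi_1$ and $\zeta$ directly. I expect (a) to be the more delicate case, since it depends on the subtle interplay between the convexity of $\varphi_2$ and that of $\psi, \psi^*$ rather than on a single structural condition.
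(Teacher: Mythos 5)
Your proposal is correct and follows essentially the same route as the paper: both cases start from $uvw \leq \varphi_2^*(u) + \varphi_2(vw)$, with (a) handled by Young's inequality for $(\psi,\psi^*)$ plus convexity of $\varphi_2$, $\psi$, $\psi^*$, and (b) by the global $\Delta'$ condition followed by Young's inequality applied to $\varphi_2(v)\cdot\varphi_2(w)$. The only cosmetic differences are the order of the two convexity steps in (a) and, in (b), that the paper absorbs the $\Delta'$ constant into the argument (using $\varphi_2(vw)\leq\varphi_2(v/a)\varphi_2(w)$) while you carry it as the multiplicative factor $M=\max(1,C)$, which the hypothesis of Theorem \ref{existence} equally permits.
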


\begin{proof}If condition (a) holds, then
\begin{eqnarray*}
uvw &\leq& \varphi_2^*(u) + \varphi_2(vw)\\
&\leq& \varphi_2^*(u) + \varphi_2(\psi(v) + \psi^*(w))\\
&\leq& \varphi_2^*(u) + \varphi_2(\frac{1}{2}\psi(2v) + \frac{1}{2}\psi^*(2w))\\
&\leq& \varphi_2^*(u) + \frac{1}{2}\left[\varphi_2(\psi(2v)) + \varphi_2(\psi^*(2w))\right]\\
&\leq& \varphi_2^*(u) + \frac{1}{2}\left[\varphi_1(2v)) + \zeta(2w)\right].
\end{eqnarray*} 
If on the other hand condition (b) holds, then using the fact that $\varphi_2(avw) \leq \varphi_2(v)\varphi_2(w)$ for some $a > 0$, a similar argument yields
\begin{eqnarray*}
uvw &\leq& \varphi_2^*(u) + \varphi_2(vw)\\
&\leq& \varphi_2^*(u) + \varphi_2(\frac{1}{a}v)\varphi_2(w)\\
&\leq& \varphi_2^*(u) + [\psi(\varphi_2(\frac{1}{a}v)) + \psi^*(\varphi_2(w))]\\
&\leq& \varphi_2^*(u) + \varphi_1(\frac{1}{a} v)) + \zeta(w).
\end{eqnarray*} 
 
\end{proof}

\begin{remark}
It remains to show that the above existence criteria truly are more general than those provided in \cite{KR}. In this regard notice that the results in \cite{KR} summarised in Theorem \ref{krasrut}, are formulated for finite measure spaces. In the case of finite measure spaces one can get by with more relaxed criteria in that neither the inequalities stated in the first condition, the $\Delta'$ condition in the second need to hold globally, but rather only for $u$ larger than some $u_0 \geq 0$. For infinite measures we need $u_0=0$ for those results to work. In this context (with $u_0$ replaced by 0), both the above conditions are indeed contained in Theorem \ref{existence} and its corollary. To see this, notice that if the first condition in Theorem \ref{krasrut}, we may use convexity and the usual Hausdorff-Young inequality to conclude that 
\begin{eqnarray*}
uvw &\leq& \varphi_2^*(u) + \varphi_2(vw)\\
&\leq& \varphi_2^*(u) + \varphi_2(\zeta(v) + \zeta^*(w))\\
&\leq& \varphi_2^*(u) + \varphi_2(\frac{1}{2}\zeta(2v) + \frac{1}{2}\zeta^*(2w))\\
&\leq& \varphi_2^*(u) + \frac{1}{2}\left[\varphi_2(\zeta(2v)) + \varphi_2(\zeta^*(2w))\right]\\
&\leq& \varphi_2^*(u) + \frac{1}{2}\left[\varphi_1(2\alpha v)) + \zeta(2\beta w)\right].
\end{eqnarray*}

If on the other hand $\varphi_2$ satisfies $\Delta'$ globally, we may find $a > 0$ so that $\varphi_2(vw) \leq \varphi_2(\frac{1}{a}v)\varphi_2(w)$ for all $v, w \geq 0$. If therefore the second condition Theorem \ref{krasrut} holds, we may then use convexity and the usual Hausdorff-Young to conclude that 
\begin{eqnarray*}
uvw &\leq& \varphi_2^*(u) + \varphi_2(vw)\\
&\leq& \varphi_2^*(u) + \varphi_2(\frac{1}{a}v)\varphi_2(w)\\
&\leq& \varphi_2^*(u) + \frac{1}{\alpha\beta}[\alpha\varphi_2(\frac{1}{a}v).\beta\varphi_2(w)]\\
&\leq& \varphi_2^*(u) + \frac{1}{\alpha\beta}[\zeta(\alpha\varphi_2(\frac{1}{a}v)) + \zeta^*(\beta\varphi_2(w))]\\
&\leq& \varphi_2^*(u) + \frac{1}{\alpha\beta}\left[\varphi_1(\frac{1}{a} v)) + \zeta(w)\right].
\end{eqnarray*}

\end{remark}

\section{Common multiplier-based techniques}

\subsection{Rescaling multipliers}

For $L^p$-spaces any multiplier $g$ from say $L^p$ to $L^q$ can be ``rescaled'' to produce a multiplier from $L^{pr}$ to $L^{qr}$. If $g = u|g|$ is the polar decomposition of $g$, one simply replaces $g$ with $u|g|^{1/r}$ to achieve this. But what about Orlicz spaces? Suppose that $\psi$, $\varphi_1$ and $\varphi_2$ are Orlicz functions related by the equation $\psi \circ \varphi_1 = \varphi_2$ and suppose that for some measure space $(X, \Sigma, \nu)$, $f \geq 0$ induces a multiplication operator from $L^{\psi}(\nu)$ to $L^1(\nu)$. When can $f$ be similarly rescaled to produce an element which induces a related multiplication map from $L^{\varphi_1}$ to $L^{\varphi_2}$ with similar properties to the original? In the case where $\varphi_2$ satisfies $\Delta'$ globally, we have the following result.

\begin{lemma}[\cite{LM}]
Let $\psi, \varphi_1, \varphi_2$ be Orlicz functions for which $\psi\circ\varphi = \zeta$, and let $\M$ be a semifinite von Neumann algebra with fns trace $\tau$.  For any $a \in L^{\zeta}(\widetilde{\M})$ with $\|a\|_{\zeta} < 1$, we have that $\varphi(|a|) \in L^{\psi}(\widetilde{\M})$ and $\|\varphi(|a|)\|_{\psi} \leq \|a\|_{\varphi_1}$.
\end{lemma}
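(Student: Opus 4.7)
The plan is to translate the Luxemburg-norm bound into a tracial estimate by way of Proposition~\ref{DPvsK}, the key extra ingredient being a convexity trick that exploits $\|a\|_\zeta < 1$. Given $\|a\|_\zeta < 1$, I would pick any $\mu$ with $\|a\|_\zeta < \mu < 1$. By the infimum characterisation of the Luxemburg norm provided by Proposition~\ref{DPvsK}, it then follows that $\zeta(|a|/\mu) \in \tM$ and $\tau\bigl(\zeta(|a|/\mu)\bigr) \leq 1$. Since $\zeta = \psi \circ \varphi$, the Borel functional calculus applied to $|a|$ rewrites this as $\tau\bigl(\psi(\varphi(|a|/\mu))\bigr) \leq 1$.

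The crux is a convexity trick that uses $\mu < 1$. Since $\varphi$ is convex with $\varphi(0) = 0$, we have $\varphi(\mu s) \leq \mu\,\varphi(s)$ for every $s \geq 0$, equivalently $\varphi(t)/\mu \leq \varphi(t/\mu)$ for every $t \geq 0$. Transported through the spectral resolution of $|a|$, this yields the operator inequality $\varphi(|a|)/\mu \leq \varphi(|a|/\mu)$, and monotonicity of $\psi$ lifts it to $\psi(\varphi(|a|)/\mu) \leq \psi(\varphi(|a|/\mu))$. Taking traces,
\[
\tau\bigl(\psi(\varphi(|a|)/\mu)\bigr) \;\leq\; \tau\bigl(\psi(\varphi(|a|/\mu))\bigr) \;=\; \tau\bigl(\zeta(|a|/\mu)\bigr) \;\leq\; 1.
\]
Applying Proposition~\ref{DPvsK} in the reverse direction, now to the Orlicz function $\psi$ and the operator $\varphi(|a|)$, yields $\varphi(|a|) \in L^\psi(\tM)$ with $\|\varphi(|a|)\|_\psi \leq \mu$. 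Letting $\mu \downarrow \|a\|_\zeta$ gives the claimed estimate.

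I expect the main technical hurdle to lie not in the convexity manipulation itself but in the measurability bookkeeping: one must verify that $\varphi(|a|/\mu)$ and $\varphi(|a|)$ are $\tau$-measurable before $\psi$ may sensibly be applied to them by functional calculus, and that the operator inequalities above really pass to the trace. Both points can be handled by Lemma~\ref{DPvsKlemma}, which supplies the identities $\mu_t\bigl(\psi(\varphi(|a|/\mu))\bigr) = \zeta(\mu_t(|a|/\mu))$ (and analogous identities for $\varphi(|a|)$), so that finiteness of trace on the outer composite forces the intermediate operators into $\tM$. Once this bookkeeping is in place, the proof is essentially a transparent reduction to the scalar convexity argument.
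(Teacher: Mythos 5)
Your proposal is correct. The paper itself gives no proof of this lemma (it is quoted from \cite{LM}), but your argument is the natural one and all the steps check out: the infimum characterisation of Proposition~\ref{DPvsK} for $\mu$ strictly above $\|a\|_\zeta$, the scalar convexity inequality $\varphi(t)/\mu \leq \varphi(t/\mu)$ transported through the functional calculus of $|a|$, monotonicity of $\psi$ and of the trace, and then Proposition~\ref{DPvsK} again in reverse, with the measurability of the intermediate operators secured by domination $0 \leq \psi(\varphi(|a|)/\mu) \leq \zeta(|a|/\mu) \in \tM$. The only point worth making explicit is that "$\mu$ admissible for every $\mu$ strictly above the infimum" uses the upward closedness of the set $\{\lambda > 0 : \zeta(|a|/\lambda) \in \tM,\ \tau(\zeta(|a|/\lambda)) \leq 1\}$, which itself follows from the same monotonicity-plus-domination reasoning; this is routine and does not constitute a gap.
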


\begin{proposition}\label{scalemult}
Let $\psi$, $\varphi_1$ and $\varphi_2$ be Orlicz functions related by $\psi \circ \varphi_1 = \varphi_2$, and for which $\zeta = \psi^*\circ\varphi_2$ is again an Orlicz function. If 
$\varphi_2$ satisfies $\Delta_2$, then for any $g \in L_+^\zeta(\tM)$, the 
element $\varphi_2(g)$ will belong to $L^{\psi^*}(\tM)$, and hence induce a multiplication operator from $L^{\psi}(\tM)$ to $L^1(\tM)$. If on the 
other hand $\varphi_2$ satisfies $\Delta'$ globally, then for any  $f \in L^{\psi^*}_+(\tM)$, the function $\varphi_2^{-1}(f)$ will induce a multiplication operator from $L^{\varphi_1}(\tM)$ to $L^{\varphi_2}(\tM)$.
\end{proposition}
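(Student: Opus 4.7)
The plan is to treat the two halves of the statement separately, each by reducing to machinery already assembled in the paper.

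For the first assertion, the crux is to show $\varphi_2(g)\in L^{\psi^*}(\tM)$; once this is in hand, Proposition~\ref{kothe} immediately delivers the K\"othe-duality bound $\tau(|\varphi_2(g)\,h|)\le \|\varphi_2(g)\|^0_{\psi^*}\,\|h\|_\psi$ for every $h\in L^\psi(\tM)$, so that $\varphi_2(g)$ indeed induces a multiplier $L^\psi(\tM)\to L^1(\tM)$. To obtain the membership $\varphi_2(g)\in L^{\psi^*}(\tM)$, I would invoke the preceding composition lemma from \cite{LM}, applying it with the role of its ``$\psi$'' played by $\psi^*$ and the role of its ``$\varphi$'' played by $\varphi_2$: since by hypothesis $\psi^*\circ\varphi_2=\zeta$ is an Orlicz function, the lemma yields $\varphi_2(|g'|)\in L^{\psi^*}(\tM)$ for every $g'\in L^\zeta(\tM)$ of $\zeta$-norm less than one. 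The upgrade from unit-ball elements to an arbitrary $g\in L^\zeta_+(\tM)$ is precisely where $\Delta_2$ on $\varphi_2$ enters: writing $g=\lambda g'$ with $\|g'\|_\zeta<1$ and iterating $\varphi_2(2u)\le K\varphi_2(u)$ yields $\varphi_2(g)\le K^n\varphi_2(g')$ for $n$ with $2^n\ge\lambda$, and this scalar multiple still lies in the linear space $L^{\psi^*}(\tM)$.

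For the second assertion, the plan is to reduce to Corollary~\ref{compmult}(b). Two checks are required. First, $\varphi_2^{-1}(f)\in L^\zeta(\tM)$: convexity of $\varphi_2$ with $\varphi_2(0)=0$ gives $\varphi_2(\lambda\varphi_2^{-1}(f))\le\lambda f$ for $0<\lambda\le 1$, so that $\tau(\zeta(\lambda\varphi_2^{-1}(f)))=\tau(\psi^*(\varphi_2(\lambda\varphi_2^{-1}(f))))\le\tau(\psi^*(\lambda f))$, which is finite for $\lambda$ small enough because $f\in L^{\psi^*}_+(\tM)$. Second, the three structural hypotheses of Corollary~\ref{compmult}(b) are in force: $\varphi_2$ is globally $\Delta'$, $\zeta=\psi^*\circ\varphi_2$ is an Orlicz function, and the composition identity linking $\psi,\varphi_1,\varphi_2$ is part of the Proposition's hypotheses. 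The corollary therefore delivers $\varphi_2^{-1}(f)\cdot h\in L^{\varphi_2}(\tM)$ for every $h\in L^{\varphi_1}(\tM)$, i.e., the desired multiplier.

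The only place that calls for genuine care is the rescaling in the first half: one has to track how the $\Delta_2$ constant enters when passing from a $g'$ with $\|g'\|_\zeta<1$ to an arbitrary $g\in L^\zeta_+(\tM)$, and this is precisely the role of the $\Delta_2$ hypothesis. The second half is essentially bookkeeping once the membership $\varphi_2^{-1}(f)\in L^\zeta(\tM)$ has been verified, since the generalised Hausdorff--Young inequality driving Corollary~\ref{compmult}(b) has already absorbed the global $\Delta'$ condition on $\varphi_2$.
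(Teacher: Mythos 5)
Your argument coincides with the paper's own proof in both halves: the first half uses the preceding lemma from \cite{LM} (with $\psi^*$ and $\varphi_2$ in the roles of its $\psi$ and $\varphi$) together with the iterated $\Delta_2$ bound $\varphi_2(g)\le K^N\varphi_2(\alpha g)$ and K\"othe duality, and the second half uses convexity to place $\varphi_2^{-1}(f)$ in $L^\zeta_+(\tM)$ before invoking Corollary~\ref{compmult}(b), exactly as the paper does. The only point worth noting --- which the paper itself also elides --- is that the composition identity in the Proposition reads $\psi\circ\varphi_1=\varphi_2$ while Corollary~\ref{compmult}(b) asks for $\psi\circ\varphi_2=\varphi_1$; this is an apparent typographical inconsistency in the source rather than a gap in your reasoning.
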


\begin{proof}
Let $g \in L_+^\zeta(\tM)$ be given, and select $\alpha > 0$ so that $\|\alpha g\|_\zeta < 1$. Then $\varphi(\alpha g) \in L^{\psi^*}_+(\tM)$ by the lemma. Since $\varphi_2$ satisfies the $\Delta_2$ condition, we can find a constant $K$ so that $\varphi_2(2u) \leq K \varphi_2(u)$ for all $u\geq 0$. Clearly $\alpha > \frac{1}{2^N}$ for some $N$. Then $\varphi_2(\frac{1}{\alpha}u) \leq \varphi_2(2^Nu) \leq K^N \varphi_2(u)$ for all $u$, or equivalently $\varphi_2(u) \leq K^N \varphi_2(\alpha u)$. Therefore $\varphi_2(g) \leq K^N \varphi_2(\alpha g)$. This in turn ensures that $\varphi(g) \in L^{\psi^*}_+(\tM)$.    
As far as the second claim is concerned, notice that $\Delta'$ implies $\Delta_2$ and hence that $\varphi_2^{-1}$ is a ``proper'' inverse function. For any $0 < s < 1$, convexity ensures that $\zeta(s\varphi_2^{-1}(f)) = \psi^*\circ\varphi_2(s\varphi_2^{-1}(f)) \leq \psi^*(s\varphi_2(\varphi_2^{-1}(f)) = \psi^*(sf)$. This inequality in turn enables us to conclude that $\varphi_2^{-1}(f) \in L_+^\zeta(\tM)$. The claim now follows from Corollary \ref{compmult}.
\end{proof}

\subsection{Spaces constructed from equivalent measures} For $L^p$-spaces it is precisely the ability to rescale multipliers (mentioned at the start of this section), that ensures that $L^p$-spaces produced by equivalent measures, are linearly isometric. To see this let $(X, \Sigma)$ be a measure space equipped with two $\sigma$-finite measures $\nu_1, \nu_2$ which are equivalent in the sense that $\nu_1 << \nu_2$ and $\nu_2 << \nu_1$. Then for any $p > 0$, $f \to f\left(\frac{d\nu_1}{d\nu_2}\right)^{1/p}$ defines a linear isometry from $L^p(\nu_1)$ onto $L^p(\nu_2)$. But what about Orlicz spaces? $f \to f\left(\frac{d\nu_1}{d\nu_2}\right)$ defines a linear isometry from $L^1(\nu_1)$ onto $L^1(\nu_2)$. Given a general Orlicz function $\varphi$, when can we rescale this multiplication operator to produce a multiplication map which isomorphically identifies $L^\varphi(\nu_1)$ with $L^\varphi(\nu_2)$? In our investigation of the plausibility of such a state of affairs, we will restrict our attention to classical Orlicz spaces. However since our primary interest is still in understanding which techniques may reasonably be expected to carry over to the noncommutative setting, we we will impose no restriction on the underlying measure spaces other than those that are absolutely necessary. (The category of commutative von Neumann algebras is known to correspond up to $*$-isomorphism to $L^\infty$-spaces on localizable measure spaces. \cite{Sak}.) Thus the simplification that may be achieved by restricting attention to standard Borel spaces is not at our disposal. 

The following proposition goes some way to answering the above question.

\begin{proposition}\label{equivspace}
Let $(X, \Sigma)$ be a measure space equipped with two $\sigma$-finite measures 
$\nu_1, \nu_2$ which are equivalent in the sense that $\nu_1 << \nu_2$ and $\nu_2 << \nu_1$.
If $\varphi$ satisfies $\Delta'$, then $f \to \varphi^{-1}\left(\frac{d\nu_1}{d\nu_2}\right)f$ defines a continuous map from $L^\varphi(\nu_1)$ to $L^\varphi(\nu_2)$. If $\varphi$ satisfies $\nabla'$, then this same map is bounded below on its domain. 
\end{proposition}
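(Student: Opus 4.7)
The plan is to show that both claims reduce, after a suitable choice of Luxemburg-modular scaling, to a pointwise inequality between $\varphi\bigl(\varphi^{-1}(h)|f|/\lambda\bigr)$ and $h\cdot\varphi(|f|/(c\lambda))$, where $h = d\nu_1/d\nu_2$. The $\Delta'$ and $\nabla'$ conditions give this inequality in opposite directions, and in each case the factor $h$ is precisely what converts integration against $\nu_2$ into integration against $\nu_1$.

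For the continuity claim I would use $\Delta'$ in its equivalent form $\varphi(ast) \leq \varphi(s)\varphi(t)$ (derived in the introduction), and substitute $s = \varphi^{-1}(h)$, $t = |f|/\lambda$ to obtain $\varphi\bigl(a\varphi^{-1}(h)|f|/\lambda\bigr) \leq h\,\varphi(|f|/\lambda)$. Writing $g = \varphi^{-1}(h)f$ and $\lambda' = \lambda/a$, this reads $\varphi(|g|/\lambda') \leq h\,\varphi(|f|/(a\lambda'))$. Integrating against $\nu_2$ and using $h\,d\nu_2 = d\nu_1$ turns the right-hand side into $\int\varphi(|f|/(a\lambda'))\,d\nu_1$; choosing $a\lambda' = \|f\|_{\varphi,\nu_1}$ forces the left-hand modular to be at most $1$, which yields $\|g\|_{\varphi,\nu_2} \leq (1/a)\|f\|_{\varphi,\nu_1}$ and hence continuity.

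For the lower-bound claim the same strategy runs in reverse using $\nabla'$: $\varphi(s)\varphi(t) \leq \varphi(bst)$. With the same substitutions we obtain $h\,\varphi(|f|/\mu) \leq \varphi\bigl(b\varphi^{-1}(h)|f|/\mu\bigr) = \varphi(b|g|/\mu)$. Integrating against $\nu_2$ now yields $\int\varphi(|f|/\mu)\,d\nu_1 \leq \int\varphi(b|g|/\mu)\,d\nu_2$, so taking $\mu = b\|g\|_{\varphi,\nu_2}$ gives $\|f\|_{\varphi,\nu_1} \leq b\|g\|_{\varphi,\nu_2}$, which is the claimed lower bound with constant $1/b$.

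The main obstacle is ensuring that the identity $\varphi(\varphi^{-1}(h)) = h$ actually holds almost everywhere, so that the factor $h$ genuinely emerges from the pointwise estimates. In the $\Delta'$ case the global formulation already forces $a_\varphi = 0$ and, via $\Delta_2$, $b_\varphi = \infty$, so $\varphi^{-1}$ is a bona fide inverse as noted in the introduction; in the $\nabla'$ case the analogous global formulation is what is needed to make the estimate $\varphi(s)\varphi(t) \leq \varphi(bst)$ applicable at all values of $s = \varphi^{-1}(h)$ and $t = |f|/\mu$ that arise. A minor technical point is the handling of the exceptional sets $\{h = 0\}$ and $\{h = \infty\}$, but since $\nu_1$ and $\nu_2$ are equivalent these are $\nu_2$-null and $\nu_1$-null respectively and contribute nothing to either modular integral.
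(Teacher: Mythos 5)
Your argument is correct and follows essentially the same route as the paper: apply the multiplicative form of $\Delta'$ (resp.\ $\nabla'$) with $s=\varphi^{-1}(h)$, use $\varphi(\varphi^{-1}(h))=h$ to turn the density into a change of measure, and then optimize the Luxemburg modular to read off the norm inequality. Your bookkeeping of the constants (yielding $\|g\|_{\varphi,\nu_2}\leq \frac{1}{a}\|f\|_{\varphi,\nu_1}$) and your explicit treatment of the $\nabla'$ half and of the well-definedness of $\varphi^{-1}(h)$ are in fact slightly more careful than the paper's, which leaves the second case to ``a similar argument.''
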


\begin{proof}
Suppose first that $\varphi \in \Delta'$. That means we can find some $a>0$ for which $$\varphi(auv) \leq \varphi(u)\varphi(v)$$ for all $u,v \geq 0$. As before the fact that in this case $\varphi_2^{-1}$ is a ``proper'' inverse function ensures that $\varphi^{-1}\left(\frac{d\nu_1}{d\nu_2}\right)$ is well-defined. From the first centred equation we may now conclude that for any $f \in L^\varphi$ and any $s>0$, we have  $$\varphi\left(\frac{a}{s}\varphi^{-1}\left(\frac{d\nu_1}{d\nu_2}\right)f\right) \leq \frac{d\nu_1}{d\nu_2}\varphi(\frac{1}{s}f).$$This in turn leads to the inequality $$\int \varphi\left(\frac{a}{s}\varphi^{-1}\left(\frac{d\nu_1}{d\nu_2}\right)f\right) d\nu_2 \leq \int\varphi(\frac{1}{s}f)d\nu_1,$$which in turn ensures that we have $$\|\varphi^{-1}\left(\frac{d\nu_1}{d\nu_2}\right)f\|_{\varphi, 2} \leq a\|f\|_{\varphi, 1}$$for the Luxemburg norm.

If on the other hand $\varphi$ satisfies $\nabla'$, a similar argument shows that we can then find some $b>0$ for which the Luxemburg norm satisfies $$\|\varphi^{-1}\left(\frac{d\nu_1}{d\nu_2}\right)f\|_{\varphi, 2} \geq b\|f\|_{\varphi, 1}.$$
\end{proof}

Proposition \ref{scalemult} suggests that $\varphi^{-1}\left(\frac{d\nu_1}{d\nu_2}\right)$ is indeed the appropriate function to use when trying to build an isomorphism between the Orlicz spaces by means of the Radon-Nikodym derivative. However for very general measure spaces, it is difficult to see how one may obatin a result of the above type without assuming $\varphi \in \Delta' \cap \nabla'$. Unfortunately there is only one problem with that restriction. As we can see from the result below, it yields no new information! 

\begin{theorem}
An Orlicz function $\psi$ belongs to $\Delta' \cap \nabla'$ if and only if $\psi \sim t^p$ for some $p \geq 1$. 
\end{theorem}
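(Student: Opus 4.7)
For the easy direction, $\psi(t) = t^p$ satisfies $\psi(st) = \psi(s)\psi(t)$ exactly, so both $\Delta'$ and $\nabla'$ hold with $a = b = 1$, and this is preserved under equivalence of Orlicz functions. For the converse, my plan is to reduce the hypothesis to an approximate Cauchy equation for the function $\phi(x) := \log\psi(e^x)$. First note that $\Delta'$ implies $\Delta_2$ at infinity: setting $s = \max(2,u_0)$ in the form $\psi(st) \leq C\psi(s)\psi(t)$ yields $\psi(2u) \leq K\psi(u)$ for large $u$, and iterating gives $\psi(bu) \leq K'\psi(u)$ for some $K' > 0$. Combined with $\nabla'$ in the form $\psi(bst) \geq \psi(s)\psi(t)$, this produces the symmetric bound
$$c\,\psi(s)\psi(t) \leq \psi(st) \leq C\,\psi(s)\psi(t)\qquad (s,t \geq u_0'),$$
so after taking logarithms and substituting $x = \log s$, $y = \log t$ one obtains the approximate Cauchy equation
$$|\phi(x+y) - \phi(x) - \phi(y)| \leq D \qquad (x,y \geq x_0 := \log u_0'),$$
with $D = \max(|\log c|, |\log C|)$; the function $\phi$ is monotone increasing by monotonicity of $\psi$.

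The plan is now a Hyers--Ulam style renormalization. Define $\phi_n(x) := 2^{-n}\phi(2^n x)$ for $x \geq x_0$. Setting $y = x$ in the approximate additivity yields $|\phi(2x) - 2\phi(x)| \leq D$, and applying this at $2^n x$ gives the telescoping estimate $|\phi_{n+1}(x) - \phi_n(x)| \leq D/2^{n+1}$. Hence $(\phi_n)$ is uniformly Cauchy on $[x_0,\infty)$; its limit $\phi_\infty$ inherits monotonicity and satisfies the \emph{exact} Cauchy equation (the error $D/2^n$ in $|\phi_n(x+y) - \phi_n(x) - \phi_n(y)|$ vanishes). A monotone solution of Cauchy's equation on a half-line is necessarily linear, so $\phi_\infty(x) = px$ for some $p \geq 0$, and summing the Cauchy differences yields $|\phi(x) - px| \leq D$ for all $x \geq x_0$.

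Unwinding the substitution gives $e^{-D} t^p \leq \psi(t) \leq e^D t^p$ for $t$ large, which is precisely $\psi \sim t^p$. Finally, $p \geq 1$ follows from the fact that $t\mapsto\psi(t)/t$ is non-decreasing (by convexity and $\psi(0)=0$) with a strictly positive eventual lower bound, while $\psi \sim t^p$ would make this quotient behave like $t^{p-1}$, which decays to zero if $p < 1$. The main delicate step I anticipate is the Hyers--Ulam renormalization on a half-line: one must verify that the iterated arguments $2^n x$ remain in the domain where approximate additivity holds (automatic once $x \geq x_0 \geq 0$) and confirm that a monotone additive function on a half-line is genuinely $\mathbb{R}$-linear rather than merely $\mathbb{Q}$-linear --- both are standard but easy to gloss over.
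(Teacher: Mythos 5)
Your argument is correct, but it takes a genuinely different route from the paper. The paper does not prove the hard direction from scratch: it cites the proof on p.~31 of Rao--Ren, observes that the $N$-function hypothesis there is used only to force $p>1$, and then repairs the endpoint for general Orlicz functions by invoking Lemma~\ref{Nfn} (which gives $\liminf_{s\to\infty}\psi(s)/s>0$) to conclude $p\geq 1$. You instead give a self-contained proof via Hyers--Ulam stability: after upgrading $\Delta'\cap\nabla'$ to the two-sided bound $c\,\psi(s)\psi(t)\leq\psi(st)\leq C\,\psi(s)\psi(t)$ (the lower bound does need, as you note, the $\Delta_2$-at-infinity consequence of $\Delta'$ to absorb the constant $b$ when $b>1$), the function $\phi(x)=\log\psi(e^x)$ is approximately additive on a half-line, the renormalizations $2^{-n}\phi(2^nx)$ converge uniformly to an exactly additive monotone function, hence to $px$, and telescoping gives $|\phi(x)-px|\leq D$. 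Your endpoint argument ($p\geq1$ from the monotonicity of $\psi(t)/t$) is the same in spirit as the paper's appeal to Lemma~\ref{Nfn}. What your route buys is transparency---one sees exactly where each hypothesis enters, and the constant $e^{D}$ in the equivalence is explicit---at the cost of the two points you flag yourself, neither of which is a gap: the iterates $2^nx$ stay in the domain once $u_0'$ is enlarged so that $x_0\geq 0$, and a monotone solution of Cauchy's equation on $\{x,y\geq x_0\}$ is linear (fix $y\geq x_0$, use $f(nx)=nf(x)$ and monotonicity to squeeze $f(x)/f(y)$ between rationals $m/n$ with $nx\leq my$ and $nx\geq my$). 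One caveat shared with the paper: both arguments tacitly assume $\psi$ is finite-valued and eventually strictly positive on $[u_0,\infty)$ so that the logarithm (respectively, the Rao--Ren argument) makes sense; for Orlicz functions with $b_\psi<\infty$ the conditions $\Delta'$ and $\nabla'$ can hold vacuously and the statement requires this implicit restriction.
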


\begin{proof}
This result is proved on page 31 of \cite{RR} under the assumption that $\varphi$ is an $N$-function. Under that assumption the conclusion is that $\varphi \sim t^p$ where $p > 1$. However the only point where the proof presented there uses the assumption that $\varphi$ is an $N$-function, is precisely to show that $p > 1$. So if we dispense with this restriction and follow the proof of \cite{RR} for general Orlicz functions, we are able to show that there exist constants $0 < a_1 \leq a_2$, some $x_0 \geq 0$ and $p > 0$ so that $$(a_1x)^p \leq \varphi(x) \leq (a_2x)^p \quad \mbox{for all} \quad x \geq x_0.$$It remains to show that $p \geq 1$. This fact now follows fairly directly from Lemma \ref{Nfn}.   
\end{proof}

The unsettling conclusion we are left with, is that in the most general setting, it is not at all clear that ``equivalent'' measures will indeed produce equivalent Orlicz spaces.

\subsection{Multipliers and Composition Operators}

In the case of $L^p$ spaces the theory of multipliers closely parallels the theory of composition operators. The reason for this is that up to isometric inclusions, there is a sense in which any composition operator between $L^p$ spaces is in fact induced by a multiplier.
 
To see this let $(X_i, \Sigma_i, m_i)$ $(i = 1, 2)$ be measure spaces and let
$T : X_2 \rightarrow X_1$ be a given non-singular
measurable transformation from $X_2$
into $X_1$. Suppose for the sake of simplicity suppose that $\infty>p\geq q\geq1$. 
If the process $f \rightarrow f \circ T$ yields a bounded
linear operator from $L^p(X_1, m_1)$ to $L^q(Y, m_2) \subset
L^q(X_2, m_2)$, we call the resultant operator a \emph{composition 
operator from $L^p(X_1, m_1)$ to $L^q(X_2, m_2)$}.

In the following let $\Sigma_T$ be the $\sigma$-subalgebra of
$\Sigma_2$ generated by sets of the form $T^{-1}(E)$ where $E \in
\Sigma_1$, and let $f_T = \frac{dm_2\circ T^{-1}}{dm_1}$. Our composition operator is then made up of the following
processes:
\begin{enumerate}
\item[(I)] Restricting to the support $Z$ of $m_2 \circ T^{-1}$:
$L^p(X_1, m_1) \rightarrow L^p(Z, m_1|_Z): f \mapsto f|_Z$

\item[(II)] Scaled multiplication by the Radon-Nikodym derivative:
$L^p(Z, m_1|_Z) \rightarrow L^q(Z, m_1|_Z): f \mapsto ff_T^{1/q}$
scaled 
\item[(III)] Changing measures: $L^q(Z, m_1|_Z) \rightarrow L^q(Z, m_2 \circ
T^{-1}): ff_T^{1/q} \mapsto f$

\item[(IV)] Isometric equivalence of spaces: $L^q(Z, \Sigma_1^Z, m_2 \circ T^{-1})
\rightarrow L^q(X_2, \Sigma_T, m_2): f \mapsto f \circ T$ (Here
$\Sigma_1^Z = \{E \in \Sigma_1 | E \subset Z\}$.)

\item[(V)] Refining the $\sigma$-algebra: $L^q(X_2, \Sigma_T, m_2) \rightarrow
L^q(X_2, \Sigma_2, m_2): f \mapsto f$
\end{enumerate}
Since all the processes bar the one in step (II) are isometric inclusions, 
the composition operator shares the properties of the multiplier defined in that step. 

However when one passes from the setting of $L^p$ spaces to Orlicz spaces the theories 
seem to diverge somewhat. We pause to justify this claim. The following theorem describes those ``measurable transformations'' which induce composition operators from $L^{\varphi_1}$ to $L^{\varphi_2}$.

\begin{theorem}[\cite{LM}]
\label{5.1}
Let $\psi, \varphi_1, \varphi_2$ be Orlicz functions for which $\psi\circ\varphi_2 = \varphi_1$, and let $J : \M_1 \to \M_2$ be a normal Jordan $*$-morphism for which $\tau_2 \circ J$ is semifinite on $\M_1$, and $\epsilon - \delta$ absolutely continuous with respect to $\tau_1$.

Consider the following claims:
\begin{enumerate}
\item $f_J = \frac{d \tau_2\circ J}{d\tau_1} \in L^{\psi^*}(\widetilde{\M_1})$;
\item the canonical extension of $J$ to a Jordan $*$-morphism from $\widetilde{\M_1}$ to $\widetilde{\M_2}$, restricts to a bounded map $C_J$ from $L^{\varphi_1}(\widetilde{\M_1})$ to $L^{\varphi_2}(\widetilde{\M_2})$.
\end{enumerate}
The implication $(1) \Rightarrow (2)$ holds in general. If $\varphi_2$ satisfies $\Delta_2$ for all $t$, the two statements are equivalent. If $(1)$ does hold, then the norm of $C_J$ restricted to the self-adjoint portion of $L^{\varphi_1}(\widetilde{\M_1})$, is majorised by $\mathrm{max}\{1, \|f_J\|^0_{\psi^*}\}$.
\end{theorem}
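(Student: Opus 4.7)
The plan is to first set up two scaffolding ingredients. First, I would construct the Radon-Nikodym-type derivative $f_J$ as a positive operator affiliated with $\M_1$ (indeed $\tau_1$-measurable, thanks to the $\epsilon$-$\delta$ absolute continuity) satisfying $(\tau_2\circ J)(x)=\tau_1(f_J x)$ for positive $x\in\M_1$, and extend this identity by approximation to the classes of elements at which the formula will be applied; the $\epsilon$-$\delta$ absolute continuity of $\tau_2\circ J$ combined with the assumed semifiniteness is precisely what drives such a noncommutative Radon-Nikodym theorem. Second, I would extend $J$ to a canonical Jordan $*$-morphism $\widetilde{J}:\widetilde{\M_1}\to\widetilde{\M_2}$. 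Splitting $\M_1$ via complementary central projections on which $J$ acts as a $*$-homomorphism and a $*$-anti-homomorphism respectively, one checks that on either piece Borel functional calculus on self-adjoint elements is preserved; the key identity
\[
\widetilde{J}(\varphi_2(|f|))=\varphi_2(|\widetilde{J}(f)|)
\]
then holds for every self-adjoint $f\in\widetilde{\M_1}$.

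For $(1)\Rightarrow(2)$, I would take a self-adjoint $f\in L^{\varphi_1}(\widetilde{\M_1})$ with $\|f\|_{\varphi_1}\leq 1$, so that $\tau_1(\varphi_1(|f|))\leq 1$. Combining the intertwining identity with the Radon-Nikodym formula yields
\[
\tau_2(\varphi_2(|\widetilde{J}(f)|))=\tau_2(\widetilde{J}(\varphi_2(|f|)))=\tau_1\!\left(f_J\,\varphi_2(|f|)\right).
\]
Because $\psi\circ\varphi_2=\varphi_1$, the element $\varphi_2(|f|)$ lies in $L^{\psi}(\widetilde{\M_1})$ with Luxemburg norm at most $1$, so Proposition \ref{kothe} gives $\tau_2(\varphi_2(|\widetilde{J}(f)|))\leq \|f_J\|^{0}_{\psi^*}$. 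A routine convexity and rescaling argument (splitting into the cases $\|f_J\|^{0}_{\psi^*}\leq 1$ and $>1$) promotes this integral estimate to the Luxemburg-norm bound $\|\widetilde{J}(f)\|_{\varphi_2}\leq \max\{1,\|f_J\|^{0}_{\psi^*}\}\|f\|_{\varphi_1}$, giving the asserted norm majorisation on the self-adjoint part of $L^{\varphi_1}(\widetilde{\M_1})$.

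For the converse $(2)\Rightarrow(1)$ under $\Delta_2$ on $\varphi_2$, I would invoke the K\"othe dual description in Proposition \ref{kothe}: it suffices to show that $f_J g\in L^1(\M_1)$ for every positive $g\in L^{\psi}(\widetilde{\M_1})$, with the general case reduced by self-adjoint, positive/negative, and real/imaginary decompositions. Since $\Delta_2$ makes $\varphi_2^{-1}$ a genuine bijection of $[0,\infty)$, choosing $\mu>0$ with $\tau_1(\psi(g/\mu))<\infty$ and putting $h=\varphi_2^{-1}(g/\mu)$ produces a positive $h\in\widetilde{\M_1}$ with $\tau_1(\varphi_1(h))=\tau_1(\psi(g/\mu))<\infty$, hence $h\in L^{\varphi_1}(\widetilde{\M_1})$. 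By hypothesis (2), $\widetilde{J}(h)\in L^{\varphi_2}(\widetilde{\M_2})$, and $\Delta_2$ then forces $\tau_2(\varphi_2(\widetilde{J}(h)))<\infty$. Reversing the intertwining identity,
\[
\tau_1(f_J\,g/\mu)=\tau_2(\widetilde{J}(\varphi_2(h)))=\tau_2(\varphi_2(\widetilde{J}(h)))<\infty,
\]
so $f_J g\in L^1(\M_1)$, as needed.

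The main obstacle I expect is the functional-calculus intertwining $\widetilde{J}(\varphi_2(|f|))=\varphi_2(|\widetilde{J}(f)|)$ for unbounded $\tau$-measurable $f$ passing through a Jordan (rather than associative) $*$-morphism. Bootstrapping from the bounded self-adjoint case, where the Jordan identity $J(x^2)=J(x)^2$ handles Borel calculus essentially for free, to arbitrary self-adjoint elements of $\widetilde{\M_1}$ requires a spectral truncation argument and a convergence-in-measure continuity statement for $\widetilde{J}$ that in turn rests on normality and the $\epsilon$-$\delta$ continuity of $J$. A closely related technical subtlety is guaranteeing that $f_J$ lies in $\widetilde{\M_1}$ and not merely in the larger class of operators affiliated with $\M_1$; this is precisely where the $\epsilon$-$\delta$ formulation of absolute continuity, rather than plain absolute continuity, has to be used.
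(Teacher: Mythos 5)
The paper does not actually prove this theorem: it is imported verbatim from \cite{LM}, and the only material the paper offers is the informal factorisation (O-I)--(O-VI) of $C_J$ that follows the statement. Your proposal is consistent with, and essentially fleshes out, exactly that factorisation: your chain $\tau_2(\varphi_2(|\widetilde{J}(f)|))=\tau_2(\widetilde{J}(\varphi_2(|f|)))=\tau_1(f_J\,\varphi_2(|f|))$ is precisely the composite of steps (O-I) (rescale $f\mapsto\varphi_2(f)$ into $L^{\psi}$), (O-II) (multiply by $f_J$, controlled by the K\"othe duality of Proposition \ref{kothe}), and (O-III)--(O-V) (trace identities and the functional-calculus identification), and your converse under $\Delta_2$ correctly runs this backwards through $h=\varphi_2^{-1}(g/\mu)$. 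The quantitative step is also right: $\tau_2(\varphi_2(|C_Jf|))\leq K:=\|f_J\|^0_{\psi^*}$ upgrades to $\|C_Jf\|_{\varphi_2}\leq\max\{1,K\}$ by convexity when $K>1$ and directly from the definition of the Luxemburg norm when $K\leq 1$.

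Three small points deserve attention. First, since $\tau_2\circ J$ is again a normal semifinite \emph{trace} on $\M_1$ (decompose $J$ into its homomorphic and antihomomorphic parts), the Yeadon/Pedersen--Takesaki derivative $f_J$ is affiliated with the centre of $\M_1$; this is stronger than your ``affiliated with $\M_1$'' and is what makes the reduction to positive $g$ in the converse painless ($|f_Jg|=f_J|g|$). Second, for the antihomomorphic part one only gets $\widetilde{J}(|f|)=|\widetilde{J}(f)^*|$ rather than $|\widetilde{J}(f)|$; this is harmless because $\mu(x)=\mu(x^*)$, but it is one reason the sharp constant is claimed only on the self-adjoint portion, and you should add the one-line real/imaginary decomposition (at the cost of a factor $2$) to obtain claim (2) on all of $L^{\varphi_1}(\widetilde{\M_1})$. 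Third, the intertwining of Borel functional calculus with $\widetilde{J}$ for unbounded $\tau$-measurable self-adjoint elements, which you correctly flag as the main technical obstacle, is genuinely where the work lies in \cite{LM}; your plan (bounded case via $J(x^2)=J(x)^2$ and normality, then spectral truncation plus continuity of $\widetilde{J}$ for the topology of convergence in measure) is the standard and correct way to close it. With those provisos the argument is sound.
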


For the sake of simplicity let's assume $J$ to either be an injective homomorphism, or an injective antimorphism. If one assumes that (1) above holds, the action of the ``composition operator'' can then be broken up as follows (here we have suppressed pathological technicalities for the sake of clarity):

\begin{enumerate}
\item[(O-I)] Rescaling the elements of $L^{\varphi_1}(\M_1, \tau_1)$: \newline $L^{\varphi_1}(\M_1, \tau_1) \rightarrow L^{\psi}(\M_1, \tau_1): f \to \varphi_2(f)$.

\item[(O-II)] Multiplication by the Radon-Nikodym derivative: \newline $L^{\psi}(\M_1, \tau_1) \rightarrow L^1(\M_1, \tau_1): \varphi_2(f) \to \varphi_2(f)f_J$.

\item[(O-III)] Isometric equivalence of spaces: 
$$L^1(\M_1, \tau_1) \rightarrow L^1(\M_1, \tau_2\circ J): \varphi_2(f)f_J \to \varphi_2(f).$$
$$L^1(\M_1, \tau_2\circ J) \rightarrow L^1(\mathcal{B}, \tau_2): \varphi_2(f) \to J(\varphi_2(f)) = \varphi_2(J(f)).$$
(Here $\mathcal{B}$ is the von Neumann algebra generated by $J(\M_1)$.)

\item[(O-IV)] Refining the projection lattice: \newline $L^1(\mathcal{B}, \tau_2) \rightarrow
L^1(J(\I)\M_2J(\I), \tau_2): \varphi_2(J(f)) \mapsto \varphi_2(J(f))$.

\item[(O-V)] Canonical identification: \newline $\varphi_2(f) \in L^1(J(\I)\M_2J(\I), \tau_2) \Leftrightarrow J(f) \in L^{\varphi_2}(J(\I)\M_2J(\I), \tau_2).$

\item[(O-VI)] Canonical embedding: \newline $L^{\varphi_2}(J(\I)\M_2J(\I), \tau_2) \to L^{\varphi_2}(\M_2, \tau_2).$
\end{enumerate}

Observe that steps (O-I) and (O-V) are not even linear. For us to be able to achieve the same sort of simplification of this structure that pertains in the case of $L^p$ spaces, we would need to be able to replace processes (O-I) to (O-III) above with the simple prescription that $f$ maps to $f\varphi_2^{-1}(f_T)$ and then be sure that the mapping $f\varphi_2^{-1}(f_T)\to f$ does indeed isomorphically identify $L^{\varphi_2}(\M_1, \tau_1)$ with $L^{\varphi_2}(\M_1, \tau_2\circ J)$.  That means we need access to the full strength of Proposition \ref{equivspace} for $\varphi_2$, which would of course force $\varphi_2 \in \Delta'\cap \nabla'$. In other words we would need $\varphi_2 \sim t^p$ for some $p \geq 1$. Hence it is only really when the target space is an ismorphic copy of some $L^p$ space, that we may reasonably expect to obtain such an intimate link between the theory of multipliers and of composition operators.

\section{Compactness criteria}

We take a brief look at compactness criteria for multipliers on Orlicz spaces, indicating how difficult it is for such an operator to actually be 
compact. 

\begin{lemma} \label{nonat}
Let $\M$ be a semifinite von Neumann algebra with no minimal projections.
Then any maximal abelian von Neumann subalgebra $\M_0$ of $\M$
also has no minimal projections (hence the restriction of the trace to $\M_0$ 
will still be semifinite). In addition given any projection 
$e \in \M$ with $\tau(e)=1$ and any maximal abelian subalgebra $\mathcal{N}_0$ 
of $e\M e$, the subalgebra $\mathcal{N}_0$ will still be nonatomic with respect 
to the restriction of the trace $\tau$, and will correspond to a
classical $L^\infty(\Omega, \Sigma, \rho_\tau)$, where $(\Omega,
\Sigma, \rho_\tau)$ is a nonatomic probability space and the
measure $\mu_\tau$ is defined by $\rho_\tau(E) =
\tau(\chi_E)$ for each $E \in \Sigma$. Given any Orlicz function 
$\psi$, then under the above identification, the space $L^\psi(\Omega, \Sigma, \rho_\tau)$ corresponds canonically 
to the subspace $L^\psi(\M_0,\tau|_{\mathcal{N}_0})$ of $L^\psi(\M, \tau)$. (Here 
we have departed from our usual notational convention to clarify the traces involved.) 
\end{lemma}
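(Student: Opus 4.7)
The plan is to tackle the three nested claims of the lemma in sequence, with the same template applied twice.

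First I would establish the technical fact underpinning everything: if $\M_0$ is maximal abelian in $\M$ and $p \in \M_0$ is any projection, then $p\M_0 p$ is maximal abelian in $p\M p$. Given $y \in p\M p$ commuting with every element of $p\M_0 p$, consider $y + (\I - p)$. The key observation is that for $a \in \M_0$ one has $pa = ap$ and hence $pap = pa$, so $pa(\I - p) = 0$; a short direct computation then shows that $y + (\I - p)$ commutes with every $a \in \M_0$. By maximal abelianness this element lies in $\M_0$, giving $y = p(y + (\I - p))p \in p\M_0 p$. With this in hand, the first assertion follows by contradiction: a minimal projection $p \in \M_0$ would satisfy $p\M_0 p = \bC p$, whence MASA-ness in $p\M p$ forces $p\M p = \bC p$ and makes $p$ a minimal projection in $\M$, contrary to hypothesis. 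The semifiniteness of $\tau|_{\M_0}$ I would justify by invoking the trace-preserving normal conditional expectation $E: \M \to \M_0$ that exists for a MASA in a semifinite algebra: for any nonzero $p \in \M_0$, pick $r \leq p$ in $\M$ with $0 < \tau(r) < \infty$, apply $E$, and take a nontrivial spectral projection $\chi_{[\epsilon, \infty)}(E(r))$, which lies in $\M_0$, sits below $p$, and has finite trace.

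Next I would apply the first assertion with $\M$ replaced by $e\M e$. One first checks that $e\M e$ also has no minimal projections: if $q \leq e$ were minimal in $e\M e$, then $q = qe = eq$ gives $q\M q = q(e\M e)q = \bC q$, making $q$ minimal in $\M$, a contradiction. Since $\tau|_{e\M e}$ is a faithful normal tracial state with $\tau(e) = 1$ and $\mathcal{N}_0$ is maximal abelian in $e\M e$, the first part yields at once that $\mathcal{N}_0$ contains no minimal projections.

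For the concrete identification, $\mathcal{N}_0$ is an abelian von Neumann algebra with a faithful normal tracial state, so Gelfand--Naimark realises it as $L^\infty(\Omega, \Sigma, \rho_\tau)$ for a probability space determined by $\rho_\tau(E) = \tau(\chi_E)$, with nonatomicity of $\rho_\tau$ being precisely the absence of minimal projections just established. For the Orlicz-space identification, observe that for $f \in \mathcal{N}_0$ every spectral projection of $|f|$ again lies in $\mathcal{N}_0$, so the generalised singular value function $\mu_t(f)$ computed inside $\tM$ coincides with the classical decreasing rearrangement of the corresponding function on $\Omega$. Combining this with Proposition \ref{DPvsK} and Lemma \ref{DPvsKlemma} shows that the Luxemburg norm of $f$ in $L^\psi(\Omega, \Sigma, \rho_\tau)$ agrees with $\|\mu(f)\|_\psi$, the norm inherited from $L^\psi(\M, \tau)$, yielding the desired canonical embedding.

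The main obstacle is the maximal-abelian-in-cutdown step together with the semifiniteness of $\tau|_{\M_0}$: while both are standard facts of the operator-algebraic machinery, they really are the crux of the argument, and the rest is either routine bookkeeping or a direct translation between the spectral and measure-theoretic pictures.
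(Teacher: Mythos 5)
Your overall architecture matches the paper's: show the MASA inherits the absence of minimal projections, cut down to $e\M e$, realise $\mathcal{N}_0$ as $L^\infty$ of a nonatomic probability space via $\rho_\tau(E)=\tau(\chi_E)$, and identify the Orlicz norms by matching the generalised singular value function of an element affiliated to $\mathcal{N}_0$ with the classical decreasing rearrangement (the paper carries out this last step by verifying $\tau(\psi(|a|/\alpha)) = \int\psi(|a|/\alpha)\,\mathrm{d}\rho_\tau$ through simple-function approximation, which comes to the same thing via Lemma \ref{DPvsKlemma} and Proposition \ref{DPvsK}). The one point the paper disposes of by citing Lemma 2.1 of \cite{GJL} --- the first assertion --- you prove directly, and your cutdown argument (that $p\M_0 p$ is maximal abelian in $p\M p$, so a minimal projection of $\M_0$ would force $p\M p=\bC p$ and be minimal in $\M$) is correct and is the standard one; likewise your verification that $e\M e$ has no minimal projections and your reduction of the second assertion to the first.

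The genuine gap is your justification of the semifiniteness of $\tau|_{\M_0}$. You invoke ``the trace-preserving normal conditional expectation $E:\M\to\M_0$ that exists for a MASA in a semifinite algebra,'' but by Takesaki's theorem such an expectation exists precisely when $\tau|_{\M_0}$ is semifinite --- the very statement you are trying to establish --- so the argument is circular. Nor is this a merely presentational circularity: trace-preserving expectations onto MASAs genuinely fail to exist in general (consider the continuous MASA $L^\infty[0,1]\subset B(L^2[0,1])$, every nonzero projection of which has infinite trace), and your argument nowhere uses the hypothesis that $\M$ has no minimal projections, so as written it would ``prove'' a false statement and cannot be repaired without bringing that hypothesis into play (or falling back on the citation, as the paper does). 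The flaw is, however, quarantined: the portion of the lemma that the rest of the paper actually uses concerns $\mathcal{N}_0\subset e\M e$ with $\tau(e)=1$, where the restricted trace is a state and semifiniteness is automatic, and your treatment of that portion is sound.
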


\begin{proof} The first claim was verified in Lemma 2.1 of \cite{GJL}. 

Next let $e$ be a projection with $\tau(e)=1$. The inclusions 
 $L^\psi(\M_0,\tau|_{\mathcal{N}_0})\subset L^\psi(e\M e, \tau|_{e\M e}) 
\subset L^\psi(\M, \tau)$, are fairly easy to verify and hence it is clear that 
we may assume without loss of generality that $e\M e = \M$. 
The commutative von Neumann subalgebra $\mathcal{N}_0$ will of course
correspond to some $L^\infty(\Omega, \Sigma, \nu)$. We may now exploit 
this corrspondence and use the restriction of $\tau$ to $\M_0 = 
L^\infty(\Omega, \Sigma, \nu)$ to define a probability 
measure $\mu$ on $(\Omega,\Sigma)$  by means of
the prescription $$\rho_\tau(E) = \varphi(\chi_E)\qquad E \in \Sigma.$$In fact 
the measure $\rho_\tau$ can be shown to have the same sets of measure zero as 
$\nu$. We may therefore replace $\nu$ by $\rho_\tau$ if necessary. Moreover the
subalgebra $\mathcal{N}_0 = L^\infty(\Omega, \Sigma, \rho_\tau)$ has no minimal
projections precisely when $(\Omega, \Sigma, \rho_\tau)$ is nonatomic.
Finally by approximating with simple functions we may show that 
$\tau(|a|) = \int |a|\rm{d}\rho_\tau$ for each $a \in \widetilde{\mathcal{N}_0}$. Given 
$\alpha > 0$ and $a \in \widetilde{\mathcal{N}_0}$ for which 
$\psi(|a|/\alpha) \in \widetilde{\mathcal{N}_0}$, we therefore have that 
$$\tau(\psi(|a|/\alpha)) = \int\psi(|a|/\alpha)\rm{d}\rho.$$It therefore follows 
that $L^\psi(\Omega, \Sigma, \rho_\tau) \equiv L^\psi(\M_0,\tau|_{\mathcal{N}_0})$ with 
preservation of norms.
\end{proof}

A simple example of a compact multiplication operator on noncommutative Orlicz spaces, is provided by the suitable direct sum of such multipliers on finite dimensional von Neumann algebras. Our primary result in this section 
essentially says that this is the only example. As such this result extends and complements similar results 
in \cite{GJL}.  

\begin{theorem} \label{compact}
Let $\M$ be a semifinite von Neumann algebra with $fns$ trace $\tau$. 
Given two Orlicz functions $\psi_1$ and 
$\psi_2$, let $g\in \tM$ be given such that the map $M_g : 
L^{\psi_1}(\tM) \rightarrow L^{\psi_2}(\tM) : a \mapsto ga$ is 
compact. Then there exists a central projection $c$ such that $gc=g$ with 
$c\M$ being a direct sum of countably many finite type I factors.
\end{theorem}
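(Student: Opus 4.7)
The plan is to prove that the central support $c$ of $g$ (the smallest central projection with $gc=g$) enjoys the property that $c\M$ is a direct sum of countably many finite type I factors. I proceed by ruling out in turn three forbidden features using the compactness of $M_g$: (i) any nonzero central subprojection $z\le c$ for which $z\M$ has no minimal projection (subsuming all type II parts of $\M$ and all non-atomic portions of the type I part); (ii) any central summand isomorphic to an infinite type I factor; and (iii) an uncountable family of nonzero finite type I factor summands.

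For (i), suppose toward contradiction that some $0\ne z\le c$ has $z\M$ without minimal projections. Choose $\delta>0$ so that $e=z\cdot\chi_{[\delta,1/\delta]}(|g|)\ne 0$ (possible since $z\le c$ forces $zg\ne 0$); by $\tau$-measurability $\tau(e)<\infty$, and $e\M e$ inherits the absence of minimal projections from $z\M$. Since $e$ is a bounded spectral projection of $|g|^2$, it commutes with $|g|$, so $|g|e\in e\M e$ with $\delta e\le|g|e\le\delta^{-1}e$. Extend $\{|g|e\}''$ to a MASA $\mathcal N_0$ of $e\M e$; Lemma~\ref{nonat} identifies $\mathcal N_0\cong L^\infty(\Omega,\Sigma,\rho_\tau)$ on a non-atomic probability space, and since $|g|e\in\mathcal N_0$ the operator $|g|$ commutes with every element of $\mathcal N_0$. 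Pick independent $\pm 1$-valued Rademacher functions $r_n$ on a fixed subset $F\subset\Omega$ of positive $\rho_\tau$-measure (extended by zero outside $F$), and set $a_n=r_n/\|r_n\|_{\psi_1}$, a norm-one sequence in $L^{\psi_1}(\tM)$. Using $(r_n-r_m)^2=4\chi_{F\cap\{r_n\ne r_m\}}$ together with the commutation of $|g|$ with the $r_n$, one computes $|g(r_n-r_m)|=2|g|\chi_{F_{nm}}\ge 2\delta\chi_{F_{nm}}$, where $F_{nm}=F\cap\{r_n\ne r_m\}$ has $\rho_\tau$-measure $\rho_\tau(F)/2$. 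Hence
\[
\|M_ga_n-M_ga_m\|_{\psi_2}\ \ge\ \frac{2\delta\,\psi_1^{-1}(1/\rho_\tau(F))}{\psi_2^{-1}(2/\rho_\tau(F))}\ >\ 0
\]
uniformly in $n\ne m$, contradicting compactness of $M_g$.

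For (ii), suppose a central summand $\mathcal F\cong B(H)$ of $c\M$ with $\dim H=\infty$ has nonzero $g$-restriction. Pick a unit vector $e_l\in H$ with $ge_l\ne 0$, extend to matrix units $\{e_{ij}\}$ (each $e_{ii}$ of trace $1$), and set $a_n=e_{l,n}/\|e_{l,n}\|_{\psi_1}$, a bounded sequence in $L^{\psi_1}(\tM)$. A direct rank-one computation yields $|ge_{l,n}-ge_{l,m}|=\sqrt 2\,\|ge_l\|\,P_{nm}$, where $P_{nm}$ is the rank-one projection onto $\mathrm{span}((e_n-e_m)/\sqrt 2)$; therefore $\|M_ga_n-M_ga_m\|_{\psi_2}=\sqrt 2\,\|ge_l\|\,\psi_1^{-1}(1)/\psi_2^{-1}(1)>0$ independently of $n,m$, again contradicting compactness. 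After (i) and (ii), $c\M=\bigoplus_{i\in I}M_{n_i}(\bC)$ with all $n_i<\infty$. For (iii), $\tau$-measurability of $g$ gives $\tau(e_s^\perp(|g|))<\infty$ for some $s>0$; each summand on which $g_i$ has a singular value exceeding $s$ contributes at least one unit to this trace, so only countably many summands carry nonzero $g_i$, and restricting $c$ to those yields the desired countable direct sum.

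The main technical obstacle is step (i). The crucial choices are to take $e$ as a bounded spectral projection of $|g|^2$ (rather than an arbitrary finite-trace subprojection of $z$), so that $|g|e$ lives in $e\M e$ with a good spectral bound, and to extend $\{|g|e\}''$ to a MASA of $e\M e$ so that $|g|$ and the Rademacher functions all sit in one commutative subalgebra. This commutation is what yields the clean identity $|g(r_n-r_m)|=2|g|\chi_{F_{nm}}$, and the uniform lower bound on $\|M_ga_n-M_ga_m\|_{\psi_2}$ then follows from the standard Luxemburg-norm formula for characteristic functions.
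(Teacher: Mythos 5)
Your overall architecture (eliminate minimal-projection-free central summands, then infinite type I factor summands, then uncountably many finite type I factors) matches the paper's, and your steps (i) and (ii) are essentially sound: where the paper maps a weak*-null sequence to a norm-null one, you instead exhibit a bounded sequence whose image under $M_g$ is uniformly separated, which is a legitimate and arguably more self-contained way to contradict compactness. One repair is needed in (i): you assert $\tau\bigl(z\,\chi_{[\delta,1/\delta]}(|g|)\bigr)<\infty$ ``by $\tau$-measurability'', but measurability only gives $\tau(\chi_{(s,\infty)}(|g|))<\infty$ for all sufficiently \emph{large} $s$, whereas the $\delta$ needed to make this projection nonzero may be small (take $g=z$ on a type II$_{\infty}$ summand). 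You must instead set $e'=z\chi_{[\delta,1/\delta]}(|g|)$, take a MASA of $e'\M e'$ containing $|g|e'$, and choose a trace-one projection $e$ \emph{inside that MASA}; this retains both $\tau(e)=1$ and the commutation with $|g|$ on which your computation of $|g(r_n-r_m)|$ depends. This is fixable, so I do not regard it as fatal.

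Step (iii), however, contains a genuine gap. From $\tau(\chi_{(s,\infty)}(|g|))<\infty$ for \emph{some} $s>0$ you conclude that only countably many summands carry a nonzero $g_i$, but this only bounds the number of summands on which $g_i$ has a singular value \emph{exceeding} $s$; uncountably many summands could carry $0\neq g_i$ with $\|g_i\|\leq s$. For instance $g=\varepsilon\I$ on $\M=\bigoplus_{i\in I}\bC$ with $I$ uncountable and $\tau(c_i)=1$ is a perfectly good element of $\M\subset\tM$ with uncountable central support, and your argument does not exclude it. Countability is simply not a consequence of $\tau$-measurability; it has to be extracted from compactness a third time. The paper does this by a pigeonhole argument: if uncountably many $gc_\lambda\neq 0$, there are fixed $n,m$ and uncountably many $\lambda$ with $\tau(c_\lambda)\in[n,n+1)$ and $\|gc_\lambda\|_{\psi_2}\geq 1/m$; the resulting sequence $\{c_{\lambda_k}\}$ is bounded in $L^{\psi_1}(\tM)$, and its image cannot have a convergent subsequence (in the paper this is run through a weak*-null argument after composing with suitable multipliers; in your style one can note directly that the $gc_{\lambda_k}$ have mutually orthogonal supports and norms bounded below, so the normalized images are uniformly separated). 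Without an argument of this kind your proof does not establish the countability assertion of the theorem.
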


\begin{proof}
\textbf{Case 1 $(\M$ non-atomic):} Firstly suppose that $\M$ has no minimal projections. Let $e_0$ be an arbitrary projection in $\M$ with $\tau(e_0)=1$. 
We will show that the hypothesis ensures that $ge_0=0$. In view of the fact that 
$\M$ has no minimal projections, we know that 
$\I = \vee\{e\in\mathbb{P}(\M):\tau(e)=1\}$. Hence this observation 
suffices to prove that $g = 0$.

Let $\M_0$ be any maximal abelian von Neumann subalgebra of $e_0\M e_0$. 
By Lemma \ref{nonat} $$\M_0 = L^\infty(\Omega, \Sigma, \rho_\tau) 
\quad\mbox{and}\quad L^1(\M_0, \tau|_{\M_0}) = L^1(\Omega, \Sigma, \rho_\tau)$$for
some non-atomic probability space $(\Omega, \Sigma, \rho_\tau)$. Hence 
by the classical theory we can find a sequence $\{r_n\}$ of Rademacher 
type functions in $L^\infty(\Omega, \Sigma, \rho_\tau)$ (i.e. a sequence 
of self-adjoint unitaries in $\M_0$ with $\int_\Omega r_mr_nd\rho_\tau = 
\tau(r_mr_n) = \delta_{m,n}$ for each $m,n \in \mathbb{N}$).

We proceed to show that the sequence $\{r_n\}$ is weak* null in 
$L^\infty(\Omega, \Sigma, \rho_\tau)$. It suffices to show that $\langle b,
r_n\rangle = \tau(b^*r_n) \rightarrow 0$ for each $b \in 
L^1(\Omega, \Sigma, \rho_\tau)$. To this end let $b \in 
L^1(\Omega, \Sigma, \rho_\tau)$ and $\epsilon > 0$ be given. Since 
$L^2(\Omega, \Sigma, \rho_\tau)$ embeds densely into 
$L^1(\Omega, \Sigma, \rho_\tau)$, we can find $a_0 \in 
L^2(\Omega, \Sigma, \rho_\tau)$ with $\|a_0
- b\|_1 < \epsilon$. It is now trivial to conclude from this and
the H\"{o}lder inequality that $|\tau(b^*r_n) -
\tau((a_0)^*r_n)| < \epsilon$ for each $n$. Now since $\{r_n\}$ is a 
biorthogonal sequence in $L^2(\Omega, \Sigma, \rho_\tau)$, it is weakly null 
in $L^2$, which in turn ensures that $\lim_{n\to\infty}\tau((a_0)^*r_n)=0$. 
It therefore follows that $\limsup |\tau(b^*r_n)| \leq \epsilon$. Since 
$\epsilon > 0$ was arbitrary, we therefore have $\tau(b^*r_n) \rightarrow 0$ 
as required.

From classical results \cite[Cor 2.6.7]{BS} we know that $\mathcal{M}_0 = 
L^\infty(\Omega, \Sigma, \rho_\tau)$ continuously embeds into 
$L^{\psi_1}(\Omega, \Sigma, \rho_\tau))$. If therefore we compose the map $M_g$ 
firstly with the inclusion $L^{\psi_1}(\Omega, \Sigma, \rho_\tau)) = 
L^{\psi_1}(\M_0, \tau|_{\M_0}) \subset L^{\psi_1}(\tM)$ and then with the 
embedding $\M_0 \to L^{\psi_1}(\Omega, \Sigma, \rho_\tau))$, we obtain a 
compact map $$W_g:\M_0 \to L^{\psi_2}(\tM):a \to ga.$$ This compact map must 
map the weak* null sequence $\{r_n\}$ onto a norm-null sequence $\{gr_n\}$. But 
in view of the fact that $|r_n| =e_0$, we have that $$\mu_t(gr_n) = \mu_t(r_ng^*) 
= \mu_t(|r_ng^*|) \mu_t(|e_0g^*|) = \mu_t(ge_0)\quad\mbox{for all}\quad t \in [0,\infty)$$ 
and hence that $\|gr_n\|_{\psi_2} = \|\mu(gr_n)\|_{\psi_2}  = \|\mu(ge_0)\|_{\psi_2} 
= \|ge_0\|_{\psi_2} e$ for each $n$. Therefore $0=\lim_{n\to\infty} \|gr_n\|_{\psi_2} 
= \|ge_0\|_{\psi_2}$ as required.

\textbf{Case 2 ($\M$ a type $I_\infty$ factor):} Suppose now that $\M$ is a type $I_\infty$ factor. Then $\M$ may of course be represented as som $B(\mathfrak{k})$. We will show that then $M_g=0$. Suppose the contrary. Now if $M_g$ is a non-zero compact operator, then so is $M_{|g|}$. Hence we may assume $g$ to be positive. Since $g \neq 0$, we may find some $\lambda >0$ for which the spectral projection $\chi_{[\lambda, \infty)}(g) \neq 0$. Thus for some minimal subprojection $e$ of this spectral projection, we have that $ge \geq \lambda e_1$. Since $e$ is minimal and $\M$ semifinite, we must have $\tau(e)<\infty$. Now select a sequence $\{e_n\}$ of biorthogonal minimal projections in $B(\mathfrak{k})$ with $e=e_1$. Then select a sequence $\{v_n\}$ of partial isometries with $v_nv^*_n= e_1$ and $v^*_nv_n=e_n$. Then of course $\tau(v_m^*v_n) = \tau(v_nv_m^*) = \delta{nm}\tau(e_1)$. (This of course also shows that $\tau(e_n) = \tau(e_1)$.) Notice that since $\tau(e_n)<\infty$ for each $n$, each $e_n = |v_n|$ must belong to $L^\psi(\M)$, and hence so must each $v_n$. As elements of $L^2(\M,\tau)$, $\{v_n\}$ is a biorthogonal bounded sequence, and hence weakly null in $L^2(\M,\tau)$. 

By approximating with elements of $L^2(\M,\tau)\cap L^1(\M,\tau)$, the argument employed in the Case 1 to analyse the convergence properties of $\{r_n\}$, may now be extended to show that here $\{v_n\}$ is similarly weak*-null in 
$L^\infty(\M,\tau)$. But then we also have that $\tau(bv_n) \to 0$ for any $b\in L^\infty(\M,\tau)$. To see this note that for any $b \in L^\infty(\M,\tau)$, we must have that $be_1\in L^1(\M,\tau)$. Since $\{v_n\}$ is a weak*-null 
sequence in $L^\infty(\M,\tau)$, it follows that $\tau(bv_n) = \tau(be_1v_n) \to 0$ as $n\to \infty$. We have therefore managed to show that $\{v_n\}$ is a $\sigma(L^1\cap L^\infty, L^1+L^\infty)$-null sequence in 
$(L^1\cap L^\infty)(\tM)$. 

Since classically $(L^1\cap L^\infty)$ continuously embeds in $L^{\psi_1}$ \cite[\S 2.6]{BS}, the space $(L^1\cap L^\infty)(\tM)$ must similarly continuously embed into $L^{\psi_1}(\tM)$. As before we may compose thie embedding with 
the given compact map, to obtain a compact map $$W_g : (L^1\cap L^\infty)(\tM)\to L^{\psi_2}(\tM) : a \to ga.$$This compact map must map the $\sigma(L^1\cap L^\infty, L^1+L^\infty)$-null sequence $\{v_n\}$, onto a norm-null sequence. 
Hence we must have that $\|gv_n\|_\psi \to 0$. However by construction $$\mu_t(gv_n) = \mu_t(|(gv_n)^*|^2)^{1/2} = \mu_t(ge_1g^*)^{1/2} = \mu_t(ge_1) \geq \lambda\mu_t(e_1).$$Hence $\|gv_n\|_\psi$ cannot converge to 0 since we have 
that $\|gv_n\|_\psi = \|\mu(gv_n)\|\geq\lambda\|\mu(e_1)\|_\psi= \lambda\|e_1\|_\psi.$This contradiction clearly show that our starting assumption that $g \neq 0$, must be false.

\textbf{Case 3 (the general case):} Given a general semifinite algebra $\M$ and any central projection $c_0$ of $\M$, it is a simple matter to see that $c_0L^\psi(\tM) = L^\psi(c_0\tM)$ and that the action of $g$ on $L^\psi(c_0\tM)$ is 
induced by $gc_0$. So for any central projection $c_0$ for which $c_0\M$ is either a type II algebra or a type $I_\infty$ factor, we must have that $gc_0=0$. Let $c$ be the central carrier of the right support of $g$. We must 
then have that in terms of its central decomposition, $c\M$ is a direct sum of finite type $I$ factors. It remains to show that $c\M$ must be a countable direct sum of such factors. Suppose  that this is not the case. We show that this 
leads to a contradicition. For simplicity of notation we henceforth assume that $c=\I$ and simply write 
$\M = \oplus_\lambda \M_\lambda$, where each $M_\lambda$ is a finite type $I$ factor. Let $\{c_\lambda\}$ be the 
mutually orthogonal central projections for which $c_\lambda\M \equiv \M_\lambda$. 

Since $\M$ is semifinite and each $\M_\lambda$ finite dimensional, we must have that $0< \tau(c_\lambda)<\infty$ for each $\lambda$. In particular each $c_\lambda$ will then belong to $L^\psi(\tM)$. In view of the fact that the set $\{c_\lambda\}$ is uncountable, there must therefore exist some $n \in \mathbb{N}$ for which the set $S_n=\{c_\lambda : n\leq\tau(c_\lambda)<n+1\}$ is uncountable. Because $c=\I$ is the central carrier of the right support of $g$, we must have that $gc_\lambda \neq 0$ for each $\lambda$. Using the uncountability of $S_{n}$, we may furthermore find some $m\in \mathbb{N}$ for which the set $T_{nm} = \{c_\lambda\in S_n : \|gc_\lambda\|_\psi \geq \frac{1}{m}\}$ is uncountable. Corresponding to the pair $(n,m)$ we may therefore select a  sequence $c_k = c_{\lambda_k}$ of distinct $c_\lambda$'s for which $n \leq \tau(c_k) < n+1$ and $\|gc_k\|_\psi\geq \frac{1}{m}$. 

Given any $\alpha > 0$, we have by the Borel functional calculus that $\psi_1(\alpha c_k) = \psi_1(\alpha)c_k$. Combining this with the fact that $n \leq \tau(c_k) < n+1$, now yields the conclusion that $$\inf\{\alpha > 0 : \psi_1\left(\frac{1}{\alpha}\right)\leq \frac{1}{n}\} \geq \|c_k\|_{\psi_1} \geq \inf\{\alpha > 0 : \psi_1\left(\frac{1}{\alpha}\right)\leq \frac{1}{n+1}\}.$$Thus $\{c_k\}$ is a bounded sequence in $L^{\psi_1}(\tM)$. By passing to a subsequence if necessary, we may assume that the compact operator $M_g$ maps this onto a sequence $\{gc_k\}$ converging to say $a_0\in L^{\psi_2}(\tM)$ in norm and hence also in the topology of convergence in measure on $\tM$. 

Since $\|gc_k\|_\psi\geq \frac{1}{m}$ for each $k$, we must have $a_0\neq 0$. Thus we may select 
$b\in L^{\psi_1}(\tM)$ and $d\in L^{\psi_2^*}(\tM)$ so that $da_0b\neq 0$. (We may for example select $b$ to be a 
minimal subprojection of the right support of $a_0$, and $d$ a minimal subprojection of the left support of $a_0b$.) Now notice that $b$ induces a bounded multiplication operator from $L^\infty(\M,\tau)$ to $L^{\psi_1}(\tM)$, and $d$ a 
bounded multiplication operator from $L^{\psi_2}(\tM)$. The claim regarding $d$ follows from the fact that (under the Orlicz norm) $L^{\psi_2^*}(\tM)$ is the K\"othe dual of $L^{\psi_2}(\tM)$. To see the claim about $b$ notice that for 
any $f\in L^\infty(\M,\tau)$ we have that $\|bf\|_{\psi_1} = \|\mu(bf)\|_{\psi_1} \leq \|\mu(b)\|f\|_\infty\|_{\psi_1} = \|b\|_{\psi_1}\cdot\|f\|_\infty$. It follows that the composition of these multiplication operators 
$M_{dgb} = M_dM_gM_b$ yields a compact multiplication operator from $L^\infty(\M,\tau)$ to $L^1(\M,\tau)$. Next notice that by construction the sequence $\{c_k\}$ lies in $(L^1\cap L^\infty)(\tM)$. Since $\{c_k\}$ is a bounded 
biorthogonal sequence in $L^2(\M,\tau)$, essentially the same argument as in the previous two cases shows that $\{c_k\}$ is then a weak*-null sequence in $L^\infty(\M,\tau)$. The compact multiplication operator $M_{dgb}$ must 
then map this sequence onto a norm-null sequence. In other words we must have that $\{dgbc_k\}$ converges to 0 in 
$L^1$ norm and hence also in the topology of convergence in measure. However since as we saw earlier $gc_k \to a_0$ in measure, we should have that $dgbc_k = d(gc_k)b \to da_0b$ in measure. But this can't be since by construction 
$da_0b \neq 0$. This contradiction then serves to establish that our initial assumption that $c\M$ is a direct sum of uncountably many finite type I factors, must be false.
\end{proof}

\begin{remark}
In view of the theme of this paper, the above result was formulated for Orlicz spaces. It is however worth pointing out that with minor modifications the techniques carry over to any pair of Banach Function spaces which appear as intermediate spaces of the Banach couple $(L^\infty(\M,\tau), L^1(\M,\tau))$. 
\end{remark}

\end{document}